\documentclass[12pt]{amsart}
\usepackage[margin=1.2in]{geometry}

\usepackage{amsmath,amsthm,amssymb,bbm,mathtools}
\usepackage{wrapfig}
\usepackage{subcaption}

\usepackage[usenames]{color}

\newcommand{\black}{\black}

\usepackage[font=small,labelfont=bf]{caption}
\usepackage[bookmarks, colorlinks, breaklinks,
pdftitle={},pdfauthor={}]{hyperref}

\hypersetup{
  linkcolor=black,
  citecolor=black,
  filecolor=black,
  urlcolor=black}

\newtheorem*{theorem*}{Theorem}
\newtheorem{theorem}{Theorem}
\newtheorem{lemma}[theorem]{Lemma}
\newtheorem{corollary}[theorem]{Corollary}
\newtheorem{defn}[theorem]{Definition}
\newtheorem*{defn*}{Definition}
\newtheorem{prop}[theorem]{Proposition}
\newtheorem*{prop*}{Proposition}

\newtheorem*{conjecture*}{Conjecture}

\newtheorem*{fact*}{Fact}

\newtheorem*{remark}{Remark}
\numberwithin{equation}{section}

\newcommand{\sss}[1]{\scriptscriptstyle{#1}}

\usepackage{tikz}

\usepackage{booktabs}

\usetikzlibrary{calc,decorations.markings}
\usetikzlibrary{decorations.pathmorphing}
\usetikzlibrary{decorations.pathreplacing}
\usetikzlibrary{patterns}
\usetikzlibrary{arrows}

\newcommand{\bb}[1]{\mathbb{#1}}
\newcommand{\cc}[1]{\mathcal{#1}}

\newcommand{\ob}[1]{\left(#1\right )} 
\newcommand{\cb}[1]{\left[#1\right ]} 
\newcommand{\ab}[1]{\langle #1 \rangle} 
\newcommand{\abs}[1]{\left\vert#1\right\vert} 

\newcommand{\V}[1]{\boldsymbol{#1}}
\newcommand{\blank}[1]{}
\newcommand{\bydef}{\equiv}

\newcommand{\E}{\bb E}

\newcommand{\Z}{\bb Z}

\newcommand{\N}{\bb N}
\renewcommand{\P}{\bb P}

\newcommand{\LE}{\mathsf{LE}}
\newcommand{\saw}{\mathsf{SAW}}
\newcommand{\fe}{\mathrm{dim}_{\mathrm{H}}}

\title{Loop-erased random walk as a spin system observable}
\author{Tyler Helmuth}
\author{Assaf Shapira} 

\address{Department of Mathematical Sciences, Durham University, Lower Mountjoy, DH1 3LE Durham, United Kingdom. Email: tyler.helmuth@durham.ac.uk.}
\address{Universit\`a Roma Tre. Email: assaf.shapira@normalesup.org.}
\begin{document}

\begin{abstract}
  The determination of the Hausdorff dimension of
  the scaling limit of loop-erased random
  walk is closely related to the study of the one-point function
  of loop-erased random walk, i.e., the probability a loop-erased
  random walk passes through a given vertex. Recent work in the
  theoretical physics literature has investigated the Hausdorff
  dimension of loop-erased random walk in three dimensions by applying
  field theory techniques to study spin systems that heuristically
  encode the one-point function of loop-erased random walk. Inspired
  by this, we introduce two different spin systems whose correlation
  functions can be rigorously shown to encode the one-point function
  of loop-erased random walk.
\end{abstract}

\maketitle

\section{Introduction}
\label{sec:introduction}

\emph{Loop-erased random walk} is, informally speaking, the
probability measure on self-avoiding walks that results from removing
the loops from simple random walk in chronological order. We give a
precise description in Section~\ref{sec:LERW} below. Loop-erased
random walk is a fundamental probabilistic object with connections to
spanning trees and the uniform spanning forest~\cite{Wilson,
  Pemantle}, amongst other topics. In two dimensions, loop-erased
random walk has $\mathrm{SLE}_{2}$ as a scaling
limit~\cite{LawlerSchrammWerner}, while in four and higher dimensions
it scales to Brownian motion~\cite{Lawler}. It is possible to prove
the scaling limit of loop-erased random walk exists in three
dimensions~\cite{Kozma}, but many open questions remain,
see~\cite{LiShiraishi,AngelEtAl} and references therein.

The preceding results have been used to determine the fractal
(Hausdorff) dimension $\fe(\cc K_{d})$ of the scaling limit
$\cc K_{d}$ of $d$-dimensional loop-erased random walk when $d\neq 3$:
$\fe(\cc K_{2})=\frac{5}{4}$ and $\fe(\cc K_{d})=2$ for $d\geq
4$. Shiraishi~\cite{ShiraishiHausdorff} has given a characterization
of $\fe(\cc K_{3})$, but the numerical value is not known
rigorously. These results are all based on probabilistic tools.

Loop-erased random walk is also of interest within theoretical
physics. An interesting recent development has been the use of
non-rigorous field theory techniques for the determination of the
Hausdorff dimension
$\fe(\cc K_{d})$~\cite{FedorenkoDoussalWiese,FedorenkoWiese}. While
there is a long history of the interplay between field theories and
random walks~\cite{Symanzik,BrydgesFrohlichSpencer,BHS2}, geometric
properties of loop-erased random walk are less obviously connected to a
field theory due to `erasure' in its definition.

In this note we describe two rigorous spin system representations of
loop-erased random walk. Both representations translate the problem of
the determination of the Hausdorff dimension for loop-erased random
walk into a discrete spin system problem. This can be viewed as a
mathematical justification for the starting point of the non-rigorous
field theory steps contained in~\cite{FedorenkoWiese}, albeit for
somewhat different spin systems than the one considered
in~\cite{FedorenkoWiese}. Our proofs use a combination of Grassmann
integration and Viennot's combinatorial theory of heaps of
pieces~\cite{Abdesselam, Viennot}, and pass through intermediary
representations of our spin systems in terms of a graphical loop
models.

We defer a precise descriptions of our spin systems to
Section~\ref{sec:precise} below, but remark here that they
  both contain two bosonic components and four fermionic components,
and as a result a weight of $2-4=-2$ for each loop. This is natural,
as various ``$O(-2)$'' models have been connected with simple random
walk in the past in
physics~\cite{AbeHatano1974,BalianToulouse1973,Fisher1973,Knops1973}
and combinatorics and probability~\cite{Viennot,HelmuthLWW,Marchal}.
As highlighted in~\cite{FedorenkoWiese}, the extra components of a
spin system involved in writing $-2=2-4$ enables one to capture
loop-erased random walk statistics in addition to simple random walk
statistics.

To give a flavour of our results, let $B_{R}(0)$ denote the ball of
radius $R$ about the origin in $\Z^{d}$, and let $\partial B_{R}(0)$
denote the vertex boundary of the ball. For three distinct vertices $a,b$ and $c$
let $U_{m^{2}}(a,b,c)$ denote
either of the three-point correlation functions introduced below in
Section~\ref{sec:precise} on $B_{R}(0)$ with vertex weights $m^{2}>0$
on $\partial B_{R}(0)$.
\begin{theorem}
  \label{thm:Main-informal}
  Let $\LE(\omega)$ denote the loop-erased random walk trajectory that
  results from simple random walk started from $0\in B_{R}(0)$ and
  stopped at $\partial B_{R}(0)$. Then for any vertex $b\in B_{R-1}(0)\setminus \{0\}$,
  \begin{equation}
    \label{eq:Main-informal}
    \P\cb{b\in \LE(\omega)} = 
 \lim_{m^{2}\to\infty} \sum_{c\in \partial B_{R}(0)} U_{m^{2}}(0,b,c).
  \end{equation}
\end{theorem}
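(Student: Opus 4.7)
My plan is to expand both sides of~\eqref{eq:Main-informal} into graphical sums over walks in $B_R(0)$ and match them. On the probabilistic side, Wilson's algorithm yields the standard one-point function formula
\[
\P\cb{b \in \LE(\omega)} = \sum_{\eta\colon 0\to b} w(\eta)\,\P^{b}\cb{\text{SRW exits }B_R(0)\text{ before revisiting }\eta\setminus\{b\}},
\]
where $\eta$ ranges over self-avoiding paths from $0$ to $b$ in $B_R(0)$ and $w(\eta)$ is the probability that the loop erasure of a SRW from $0$ up to its last visit to $b$ equals $\eta$. This is the target identity.

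On the spin side, I would use the Grassmann/Gaussian integral representation of $U_{m^{2}}(0,b,c)$ afforded by the two bosonic and four fermionic components. Performing the Gaussian and Berezin integrations expands $U_{m^{2}}(0,b,c)$ into a signed sum over configurations consisting of walks joining the three sources $0,b,c$ together with a background collection of closed loops: each edge carries a SRW transition weight, each closed loop carries a factor $-2$ from the $2-4$ boson/fermion balance, and the mass $m^{2}$ appears in the weights of vertices on $\partial B_R(0)$ so that it acts as a Dirichlet-type killing potential in the limit $m^{2}\to\infty$. I would then apply Viennot's heaps-of-pieces theory to resum the background loops. For $O(-2)$-type models the classical Fisher--Viennot identity identifies a weighted sum over compatible loop heaps with the reciprocal determinant of an appropriate SRW generator, and precisely this factor converts a simple-walk weight into a loop-erased-walk weight. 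After the resummation $U_{m^{2}}(0,b,c)$ equals a sum over pairs $(\eta,\gamma)$ with $\eta\colon 0\to b$ a LERW-weighted SAW and $\gamma\colon b\to c$ a SRW avoiding $\eta\setminus\{b\}$, weighted by the boundary masses along $\gamma$.

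Summing over $c\in\partial B_R(0)$ and sending $m^{2}\to\infty$ then imposes Dirichlet boundary conditions, so that $\gamma$ becomes a SRW from $b$ stopped on its first hit of $\partial B_R(0)$; this leaves exactly the ``exit before revisiting $\eta\setminus\{b\}$'' probability and matches the displayed formula for $\P\cb{b\in\LE(\omega)}$. The delicate step, as I see it, is the heaps-of-pieces resummation: one must verify that the $-2$ cancellation produces the LERW weight $w(\eta)$ and not merely a bare SAW weight, and that the splitting of the graphical walks at the middle source $b$ is precisely the one dictated by the LERW definition (e.g., last rather than first visit to $b$). Correctly tracking signs, the interaction between $\eta$ and $\gamma$, and the placement of the three source insertions through the cancellation is the main technical burden; once this is pinned down, the $m^{2}\to\infty$ limit and the comparison with Wilson's algorithm should be routine.
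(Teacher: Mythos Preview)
Your overall strategy---expand the spin observable via Berezin integration into a signed graphical sum, resum the background loops with Viennot's heaps-of-pieces to produce loop-erased weights, then take $m^{2}\to\infty$ to impose stopping on $\partial B_R(0)$---is exactly the paper's route. A few points, however, are off and would cause trouble if you tried to carry the plan through.

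First, there is no Gaussian integration here: the ``bosonic'' components $\phi_x,\psi_x$ are nilpotent products of Grassmann generators, so the whole computation is pure Berezin integration and the graphical expansion is finite and exact term by term. Relatedly, after summing over colours the weight per \emph{directed} cycle is $-1$ (from $1-1-1$), not $-2$; the $-2$ is an undirected-cycle heuristic. Second, your intermediate decomposition of $U_{m^{2}}(0,b,c)$ into a LERW-weighted SAW $\eta\colon 0\to b$ and an independent SRW $\gamma\colon b\to c$ avoiding $\eta\setminus\{b\}$ is not how the argument goes and is not obviously correct: loop-erasure does not split cleanly at an intermediate vertex, and Viennot's bijection attaches a heap to the \emph{entire} SAW from $0$ to $c$, not to the two segments separately. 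The paper instead shows directly that the numerator-over-$Z$ equals $\sum_{\omega\colon 0\to c}1_{b\in\LE(\omega)}\,q(\omega)$, using the trivial-vs-pyramidal heap identity applied to SAWs from $0$ to $c$ through $b$; your Wilson-algorithm formula is then unnecessary (and in the form you wrote it would itself need proof).

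Finally, for the symmetric action $S$ there is a second contribution to the numerator coming from colour-$3$ cycles that hit \emph{both} endpoints $a$ and $c$ (the term $\Theta$ in the paper). This term does not vanish identically; it has to be bounded and shown to be $O(m_c^{-2})$ relative to the main term. Your sketch does not account for it, and without that estimate the $m^{2}\to\infty$ limit in~\eqref{eq:Main-informal} is not justified.
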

The left-hand side in~\eqref{eq:Main-informal} is the one-point
function for loop-erased random walk. The scaling behaviour of the
one-point function encodes $\fe(\cc K_{d})$, see~\cite{LiShiraishi1pt}
for a precise statement when $d=3$. Theorem~\ref{thm:Main-informal} is
thus our promised translation of the problem of determining
$\fe(\cc K_{d})$ into a spin system problem.

A non-rigorous analysis of a spin system similar to the one of this
paper has been used to estimate
$\fe(\cc K_{3})\approx 1.62$~\cite{FedorenkoWiese}, which agrees with
extensive simulations~\cite{WilsonD}. Rigorously
establishing a similar result would be extremely interesting.  Another
very interesting direction would be to investigate if there are
further connections between our spin systems and $\mathrm{SLE}_{2}$ in
two dimensions. We comment on some further future directions below in
Section~\ref{sec:main-result}. 

\subsection*{Acknowledgements}
\label{sec:acknowledgements}

The authors thank K.\ J.\ Wiese for bringing this problem to our
attention. We also thank the referees for their thorough and helpful
reports on a previous version of this article. TH was at the
University of Bristol when this work was carried out, and was
supported by EPSRC grant EP/P003656/1.

\section{Precise formulation of the result}
\label{sec:precise}

Our main result holds for any finite connected graph
$\cc G = (\cc V, \cc E)$.  The precise formulation relies on the use
of Grassmann algebras and Grassmann integration. The reader unfamiliar
with this subject can consult, e.g.,~\cite[Section~2]{Abdesselam}.

\subsection{The spin systems}
\label{sec:spin-system}

We first introduce the spin systems used in our representation. This
requires some notation. Let $\cc V$ be a finite set. Consider the
Grassmann algebra whose generators are
$\{\xi^{\sss (i)}_{x}, \eta^{\sss (i)}_{x}\}_{x\in \cc V, i=1,\dots,
  4}$. That is, these variables are all pairwise anticommuting.  We
set $\phi_{x}= \eta^{\sss (3)}_{x}\xi^{\sss (3)}_{x}$ and
$\psi_{x}= \eta^{\sss (4)}_{x}\xi^{\sss (4)}_{x}$; these are commuting
elements of the algebra. Note that $\phi_{x}^{2}=\psi_{x}^{2}=0$ for
all $x$.

The spins of our spin systems are six-tuples $\sigma_{x}$ for
$x\in \cc V$. For notational reasons that will become clear in what
follows, we write the six-tuples as triples of pairs
$\sigma^{\sss (i)}_{x} = (u^{\sss (i)}_{x},v^{\sss (i)}_{x})$, for
$i=1,2,3$.  The $u^{\sss (i)}$ and $v^{\sss (i)}$ are given by
$(u^{\sss (i)}_{x},v^{\sss (i)}_{x}) = (\eta^{\sss (i)}_{x},\xi^{\sss
  (i)}_{x})$ for $i=1,2$ and
$(u^{\sss (3)}_{x},v^{\sss (3)}_{x}) = (\phi_{x},\psi_{x})$. We define
a product
\begin{equation}
  \label{eq:IP}
  \sigma_{x}\cdot \sigma_{y} \bydef \sigma_{x}^{\sss (1)}\cdot \sigma_{y}^{\sss (1)} +
  \sigma_{x}^{\sss (2)}\cdot \sigma_{y}^{\sss (2)} + \sigma_{x}^{\sss (3)}\cdot \sigma_{y}^{\sss (3)},
\end{equation}
where
  \begin{equation}
    \label{eq:IP-new}
    \sigma_{x}^{\sss (i)}\cdot \sigma_{y}^{\sss (i)} \bydef
    u_{x}^{\sss (i)}v_{y}^{\sss (i)} + u_{y}^{\sss (i)}v_{x}^{\sss (i)}.
  \end{equation}

Let $\beta = (\beta_{xy})_{x,y\in \cc V}$ be a collection of
non-negative and symmetric edge weights, i.e.,
$\beta_{xy} =\beta_{yx}\geq 0$, and $\beta_{xx}=0$ for all
$x$. Further, let $(m_{x}^{2})_{x\in \cc V}$ be a collection of
  non-negative vertex weights, and define $r_{x}\bydef m_{x}^{2} + \sum_{y}\beta_{xy}$.

\subsubsection{Symmetric action}
\label{sec:S}

For notational convenience we introduce the so-called
$\tau$-field on pairs of vertices $x\neq y$, cf.~\cite{BBSbook}:
\begin{equation}
  \label{eq:tau}
  \tau^{\sss (i)}_{xy} \bydef \beta_{xy}\sigma^{\sss (i)}_{x}\cdot \sigma^{\sss
    (i)}_{y}, \qquad i=1,2,3,
\end{equation}
and we also introduce the shorthand
\begin{equation}
  \label{eq:tau-div}
  (\nabla\cdot\tau)_{x}^{2} \bydef  \sum_{i=1}^{3}\ob{\sum_{y\in\cc V} \tau^{\sss (i)}_{xy}}^{2}.
\end{equation}

Define an action $S$ by 
\begin{equation}
  \label{eq:action}
  S \bydef
  \frac{1}{2} \sum_{x\in \cc V}\ob{ 
 r_{x} \sigma_{x}\cdot \sigma_{x} + \sum_{y}(\tau_{xy}^{\sss (3)}-\tau_{xy}^{\sss
      (1)}-\tau_{xy}^{\sss (2)} ) - \frac{1}{2}((\nabla\cdot
  \tau)^{2}_{x})^{2} + \frac{1}{12}((\nabla\cdot\tau)_{x}^{2})^{3}}.
\end{equation}

Note that $S$ is an even (i.e., commuting) element of the Grassmann
algebra, so its exponential is well-defined. We define the partition function $Z$ by
\begin{equation}
  \label{eq:modelZ}
  Z \bydef \int \prod_{x\in \cc V}\ob{ 
    \prod_{i=1}^{4}
 \partial_{\xi^{\sss (i)}_{x}}\partial_{\eta^{\sss (i)}_{x}}}\,  e^{S}.
\end{equation}
For $F$ an element of the Grassmann algebra
we define a normalized expectation by 
\begin{equation}
  \label{eq:model}
  \ab{F} \bydef \frac{1}{Z} \int \prod_{x\in \cc V} 
  \ob{ \partial_{\xi^{\sss (i)}_{x}}\partial_{\eta^{\sss (i)}_{x}}} \, F e^{S}.
\end{equation}
This is a rational function of the edge and vertex variables, and in
our cases of interest it will be clear that the evaluation of this
rational function is finite.

Our main observable of interest will be, for distinct $a,b,c\in \cc V$,
\begin{equation}
  \label{eq:obs}
  U(a,b,c) \bydef 
  C_{bc} \ab{ u^{\sss (2)}_{c}v^{\sss (2)}_{b}
      u^{\sss (3)}_{b}v^{\sss (3)}_{b}
      u^{\sss (1)}_{b}v^{\sss (1)}_{a}}, \quad C_{bc} \bydef  m_{c}^{2} 
    (m_{b}^{2}+\sum_{y\in\cc V}\beta_{by})^{2}.
\end{equation}

\begin{remark}
  \label{rem:Boson-IP}
  The products $\phi_{x}\psi_{y}$ and $\psi_{x}\phi_{y}$ in \eqref{eq:IP-new}
  can be replaced with the symmetric expression
  $\phi_{x}\phi_{y}+\psi_{x}\psi_{y}$ on bipartite graphs by
  exchanging the roles of $\phi$ and $\psi$ on one bipartition. On
  non-bipartite graphs there is also natural symmetrization procedure,
  but the notation of~\eqref{eq:IP-new}
  will be more convenient for what
  follows.
\end{remark}

\begin{remark}
We have formulated the action~\eqref{eq:action} in a combinatorially
convenient manner. A more conventional form from the viewpoint of
field theory can be obtained by algebraic manipulations, see~\cite{ShapiraWiese}.
\end{remark}

\subsubsection{Chiral action}
\label{sec:S'}

\newcommand{\chiraltau}{\tau'}

For our second representation we use a variant of the term
$\nabla\cdot\tau$ from Section~\ref{sec:S}. Define
\begin{equation}
  \chiraltau_{x} \bydef \sum_{i=1}^{3}\sum_{y\in\cc
    V}\beta_{xy}v^{\sss (i)}_{y}u^{\sss (i)}_{x},
\end{equation}
and define the action
\begin{equation}
  \label{eq:action-2}
  S'\bydef \frac{1}{2} \sum_{x} \left(
    r_x \sigma_x \cdot \sigma_x 
    + 2 \tau'_x
    - (\chiraltau_{x})^2
    + \frac{2}{3} (\chiraltau_{x})^3
  \right).
\end{equation}

The observable of interest for the action $S'$ is, for $a,b,c\in\cc V$ distinct,
\begin{equation}
  \label{eq:U-2}
  U'(a,b,c) \bydef C'_{bc}\ab{u_{c}^{\sss (2)}v_{b}^{\sss
      (2)}u_{b}^{\sss (1)}v_{a}^{\sss (1)}}', \qquad C'_{bc}\bydef
  m_{c}^{2}(m_{b}^{2}+\sum_{y\in\cc V}\beta_{by}),
\end{equation}
where $\ab{\cdot}'$ is the normalized expectation defined by replacing
$S$ by $S'$ in~\eqref{eq:modelZ}--\eqref{eq:model}.

\subsection{Loop-erased random walk}
\label{sec:LERW}

Let $\beta = (\beta_{xy})_{x,y\in \cc V}$ and
$(m_{x}^{2})_{x\in \cc V}$ be as Section~\ref{sec:spin-system}. The set of edges
$\cc E$ with strictly positive weights induces a graph
$\cc G = (\cc V, \cc E)$. We will assume that the graph $\cc G$ is
connected, and that $m_{x}^{2}>0$ for some $x\in \cc V$.

Let $\blacktriangle\notin \cc V$ be an additional `cemetery'
vertex. We define a discrete-time Markov chain $X$ with state space
$\cc V \cup \{\blacktriangle\}$, $\blacktriangle$ an absorbing state, by setting
\begin{equation}
  \label{eq:RW}
  \P\cb{X_{n+1}=y \mid X_{n}=x} = 
  \begin{cases}
  \frac{\beta_{xy}}{m_{x}^{2}+\sum_{y}\beta_{xy}} & y\neq \blacktriangle, \\
  \frac{m_{x}^{2}}{m_{x}^{2}+\sum_{y}\beta_{xy}} & y=\blacktriangle.
  \end{cases}
\end{equation}
Henceforth we will simply refer to $X$ as a simple random walk, and we
view the walk as a sequence of nearest-neighbour vertices of $\cc G$.

To formally define loop-erasure, some definitions are needed. Given a
\emph{length} $k\geq 1$ and a finite walk
$\omega = (\omega_{1},\omega_{2},\dots,\omega_{k})$ we let
$\abs{\omega}=k$. A walk is \emph{self-avoiding} if
$\omega_{i}=\omega_{j}$ implies $i=j$. A \emph{rooted and directed
  cycle} is a walk with $\omega_{1}=\omega_{\abs{\omega}}$ such that
$(\omega_{1},\dots, \omega_{\abs{\omega}-1})$ is a non-empty
self-avoiding walk. Sometimes this is called a self-avoiding polygon,
but note our definition includes self-avoiding polygons of length
three that use the same edge twice. 
A \emph{directed cycle} is an
equivalence class of rooted and oriented cycles, the equivalence being
under cyclic shifts.

Given a walk $\omega$, suppose $K$ is the first index such that
$(\omega_{1},\dots, \omega_{K})$ is not a self-avoiding walk. If $K$
is finite, let $K'<K$ be the unique index such that
$(\omega_{K'},\dots, \omega_{K})$ is a rooted oriented cycle. Define
$L(\omega) = (\omega_{1},\dots, \omega_{K'},\omega_{K+1},\dots,
\omega_{\abs{\omega}})$ if $K$ is finite, and $L(\omega)=\omega$
otherwise. Note $L(\omega)$ is a walk. For any finite walk $\omega$ we define the
\emph{loop-erasure $\LE(\omega)$ of $\omega$} to be the walk that
results from iteratively applying $L$. This operation stabilizes
after finitely many steps since each application of $L$ to a walk that
is not self-avoiding reduces the length of the walk by at least one.

Let $h_{\blacktriangle}$ be the (almost surely finite) hitting time of
$\blacktriangle$, and let $h_{\blacktriangle}^{-} = h_{\blacktriangle}-1$.
\emph{Loop-erased random walk} is the law of
$\LE( (X_{n})_{n< h_{\blacktriangle}})$, i.e., the law the loop-erasure of $X$
considered up until the time of the first jump to $\blacktriangle$.

\subsection{Main results}
\label{sec:main-result}

Our first result concerns the action $S$. Recall that we assume
  throughout the paper that $\cc G$ is a finite graph.
\begin{theorem}
  \label{thm:main}
  Assume $\cc G$ is connected. For three distinct vertices $a,b,c\in \cc V$,
  \begin{equation}
    \label{eq:main}
    U(a,b,c) = \P_{a}\cb{\text{$b\in \LE((X_n)_{n<h_\blacktriangle})$ and
      }X_{h^{-}_{\blacktriangle}}=c}\cdot(1+O(m_c^{-2})) \quad \text{as
    } m_{c}^{2}\to\infty.
  \end{equation}
\end{theorem}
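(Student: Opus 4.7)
The plan is to establish~\eqref{eq:main} via a combinatorial expansion of $U(a,b,c)$ obtained from Grassmann integration, followed by a reorganization using Viennot's theory of heaps of pieces, and a final identification with the LERW probability. First I would expand $e^S$ as a sum of Grassmann monomials. The higher-order terms $-\tfrac{1}{2}((\nabla\cdot\tau)^{2})^{2}$ and $\tfrac{1}{12}((\nabla\cdot\tau)^{2})^{3}$ in~\eqref{eq:action} are engineered so that, after performing all Grassmann integrals via Wick's theorem, the contribution at each vertex reduces to one of a small set of local ``pieces'' (empty vertex, endpoint, pass-through, corner, etc.), as is standard in the heaps/random-walk dictionary. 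The fermionic Pfaffians combined with the nilpotent bosonic pair $(\phi,\psi)$ organize the result as a sum over graphical configurations consisting of walks and cycles; since there are two bosonic and four fermionic components, each closed loop carries weight $2-4=-2$, matching the discussion in the introduction.

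Next, I would apply Viennot's theorem to the resulting combinatorial sum. The insertions $u_{b}^{\sss (1)}v_{a}^{\sss (1)}$, $u_{c}^{\sss (2)}v_{b}^{\sss (2)}$, and $\phi_{b}\psi_{b}=u_{b}^{\sss (3)}v_{b}^{\sss (3)}$ respectively impose an open path from $a$ to $b$ in component~1, an open path from $b$ to $c$ in component~2, and a marking at $b$ in component~3. The heaps inversion identity, with cycles carrying weight $-2$, should cancel the closed-loop contributions in the numerator against $Z$ in the denominator, leaving $U(a,b,c)$ as a sum over pairs of walks $(\omega^{\sss (1)},\omega^{\sss (2)})$ from $a$ to $b$ and from $b$ to $c$, weighted by simple random walk transition probabilities as in~\eqref{eq:RW}, subject to a non-intersection constraint generated by the Grassmann structure.

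The central step is then the probabilistic identification of this walk-pair sum with $\P_{a}\cb{b\in\LE((X_{n})_{n<h_{\blacktriangle}})\text{ and }X_{h^{-}_{\blacktriangle}}=c}$. I expect the non-intersection constraint, together with the marking at $b$ furnished by the $\phi_{b}\psi_{b}$ insertion, to encode precisely the decomposition of simple random walk around its last visit to $b$: $\omega^{\sss (1)}$ is a walk whose loop-erasure is the portion of $\LE$ from $a$ to $b$, while $\omega^{\sss (2)}$ is the remainder avoiding $\LE(\omega^{\sss (1)})$ except at its starting point $b$. The normalization $C_{bc}=m_{c}^{2}(m_{b}^{2}+\sum_{y}\beta_{by})^{2}$ is designed to cancel the denominators of the transition probabilities at $b$ and at the death step at $c$. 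The factor $(1+O(m_{c}^{-2}))$ arises because the Grassmann observable does not by itself force $c$ to be the \emph{final} vertex before jumping to $\blacktriangle$; configurations in which the walk passes through $c$ before dying elsewhere contribute extra terms proportional to $m_{c}^{-2}$, which vanish in the stated limit.

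The main obstacle is the combinatorial identification in the second and third steps: verifying that, after Viennot's cancellation, the surviving configurations are in weight-preserving bijection with the pairs (LERW trajectory ending at $b$, continuation from $b$ to $c$), rather than with some related but different loop-soup quantity such as a uniform spanning tree edge probability. This requires careful bookkeeping of the three components and the asymmetric role of component~3, where the nilpotent $\phi,\psi$ prevent the component-3 walk from being anything other than a single marked vertex, which is what makes $b$ play the role of the distinguished point on $\LE$ rather than an arbitrary passage point of the underlying simple random walk.
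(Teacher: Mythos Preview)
Your high-level outline (Grassmann expansion $\to$ graphical loop model $\to$ Viennot $\to$ LERW) matches the paper, but two of your key structural claims are incorrect and would derail the argument.

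First, your identification of the error term is wrong. You say the $O(m_c^{-2})$ correction comes from ``configurations in which the walk passes through $c$ before dying elsewhere''. In fact the main graphical term, which the paper calls $\Gamma(a,b,c)$, already computes the LERW probability \emph{exactly}: after Viennot's identity it equals $\sum_{\omega:a\to c}1_{b\in\LE(\omega)}\P_a[(X_n)_{n<h_\blacktriangle}=\omega]$, with no spurious walks through $c$. The genuine error term, $\Theta(a,b,c)$, has a completely different origin: when you expand the numerator you find that monochromatic cycles of colour~3 touching \emph{both} endpoints $a$ and $c$ simultaneously cannot be cancelled against colours~1 and~2 (cycles touching only one endpoint do cancel, since the two available colours give $+1-1=0$). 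Bounding $\Theta$ requires splitting the colour-3 cycle into two self-avoiding arcs from $a$ to $c$ and back, and using surjectivity of the heap superposition map to dominate by three independent walk sums; the $m_c^{-2}$ comes from the first step out of $c$ along one of these arcs. Your proposal contains no hint of this mechanism.

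Second, the intermediate combinatorial object is not what you describe. After expanding $e^S$ (by nilpotency, not Wick's theorem), the $S_4$ and $S_6$ terms are designed precisely to kill all configurations with in-degree $\geq 2$ at any vertex, so the surviving numerator configurations are a \emph{single} self-avoiding walk $\gamma$ from $a$ through $b$ to $c$ (colour~1 then colour~2) together with disjoint monochromatic directed cycles. Summing over colours gives each directed cycle weight $-1$ (from $1-1-1$), not $-2$. Viennot's inversion then turns $\gamma$ plus cycles directly into $\sum_{\omega:\LE(\omega)=\gamma}$; no ``last visit to $b$'' decomposition is used or needed. Your proposed route via pairs of walks $(\omega^{(1)},\omega^{(2)})$ with a non-intersection constraint and a last-visit argument is a different (and more delicate) organization that you would have to justify separately.
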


We obtain a similar result for the action $S'$, but without any need
to take $m_{c}^{2}\to\infty$.
\begin{theorem}
  \label{thm:main-2}
  Assume $\cc G$ is connected. For three distinct vertices
  $a,b,c,\in\cc V$ such that $m_{c}^{2}>0$,
  \begin{equation}
    \label{eq:main'}
    U'(a,b,c) = \P_{a}\cb{\text{$b\in \LE((X_n)_{n<h_\blacktriangle})$ and
      }X_{h^{-}_{\blacktriangle}}=c}.
  \end{equation}
\end{theorem}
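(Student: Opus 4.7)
The plan is to graphically expand the Grassmann integral defining $U'(a,b,c)$, recognize the surviving configurations as pairs of self-avoiding walks (SAWs) accompanied by a gas of oriented cycles, and match the resulting sum with a standard determinantal formula for the LERW probability.

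The first step is to rewrite $e^{S'}$ in a manifestly local form. Writing $\tau'_x = T^{\sss (1)}_x + T^{\sss (2)}_x + T^{\sss (3)}_x$ with $T^{\sss (i)}_x \bydef \sum_y \beta_{xy} v^{\sss (i)}_y u^{\sss (i)}_x$, the three summands are pairwise commuting (as even Grassmann elements), and each satisfies $(T^{\sss (i)}_x)^2 = 0$ because squaring produces either $u^{\sss (i)}_x$ or $\phi_x$ twice. Hence $(\tau'_x)^4 = 0$, so the polynomial in $S'$ equals $2\log(1+\tau'_x)$ up to the quadratic on-site term. Combined with $\sigma_x\cdot\sigma_x = 2\bigl(\eta_x^{\sss (1)}\xi_x^{\sss (1)} + \eta_x^{\sss (2)}\xi_x^{\sss (2)} + \phi_x\psi_x\bigr)$ and the nilpotency of each summand, this yields
\[
e^{S'} = \prod_{x\in\cc V}\bigl(1+r_x\eta_x^{\sss (1)}\xi_x^{\sss (1)}\bigr)\bigl(1+r_x\eta_x^{\sss (2)}\xi_x^{\sss (2)}\bigr)\bigl(1+r_x\phi_x\psi_x\bigr)\bigl(1+\tau'_x\bigr).
\]
Expanding this product, the Grassmann integration imposes, at every vertex $x$ and each type $i\in\{1,2,3\}$, one of exactly two options: either the on-site mass for type $i$ is present and $x$ has no type-$i$ edge incident to it, or the mass is absent and $x$ has exactly one incoming and one outgoing type-$i$ edge. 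Since $(1+\tau'_x)$ emits at most one outgoing edge, and accounting for the insertions $u_c^{\sss (2)}v_b^{\sss (2)}u_b^{\sss (1)}v_a^{\sss (1)}$, the surviving configurations in $U'(a,b,c)$ are triples $(\eta_1,\eta_2,\cc L)$ consisting of an oriented SAW $\eta_1:a\to b$ on the type-1 layer, an oriented SAW $\eta_2:b\to c$ on the type-2 layer vertex-disjoint from $\eta_1$, and a vertex-disjoint family $\cc L$ of oriented cycles in $\cc G\setminus V(\eta_1\cup\eta_2)$, each cycle labelled by a type $i\in\{1,2,3\}$.

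A direct Grassmann-sign computation (the Grassmann variables on a type-1 or type-2 cycle of length $k$ form a single $2k$-cycle in the ordering permutation, of sign $-1$, whereas type-3 cycles reorder to sign $+1$ because $\phi,\psi$ commute) shows that each fermionic oriented cycle carries weight $-1$ and each bosonic oriented cycle carries weight $+1$. Summing over the type label, each unlabelled oriented cycle is effectively weighted by $-1$. Incorporating the $\beta$-weights and the displacement of mass $r_x$ at cycle and path vertices, one recovers the cycle expansion of $\det(I-P)$, with $P_{xy}\bydef \beta_{xy}/r_x$. Thus $Z' = \prod_x r_x^3 \cdot \det(I-P)$, and a parallel accounting for the numerator, together with $C'_{bc} = m_c^2 r_b$, yields
\[
U'(a,b,c) = \sum_{\eta_1,\eta_2}\Bigl(\prod_{e\in\eta_1\cup\eta_2}p_e\Bigr)\,\frac{m_c^2}{r_c}\cdot\frac{\det\bigl(I-P\restr{\cc V\setminus V(\eta_1\cup\eta_2)}\bigr)}{\det(I-P)},
\]
summed over vertex-disjoint oriented SAWs $\eta_1:a\to b$ and $\eta_2:b\to c$. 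A standard identity (via Wilson's algorithm, or directly from the cycle expansion of the determinant; cf.~\cite{Viennot}) shows that the summand equals $\P_a[\LE((X_n)_{n<h_\blacktriangle}) = \eta_1\cdot\eta_2,\,X_{h^-_\blacktriangle}=c]$, and summation over $(\eta_1,\eta_2)$ reproduces $\P_a[b\in\LE,\,X_{h^-_\blacktriangle}=c]$.

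The main obstacle is the careful sign accounting in the Grassmann expansion, particularly verifying that the three per-type contributions combine cleanly to weight $-1$ per oriented cycle. The chiral structure of $S'$ is essential: each cycle appears with a single orientation built directly into $\tau'$, so no symmetry factor must be reconciled. This is what makes \eqref{eq:main'} an \emph{exact} identity, in contrast to Theorem~\ref{thm:main}, whose symmetric action $S$ treats both orientations and thereby produces the $O(m_c^{-2})$ correction.
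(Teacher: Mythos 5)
Your overall route coincides with the paper's: factor $e^{S'}$ into the local product $\prod_{x,i}(1+r_x u^{\sss (i)}_x v^{\sss (i)}_x)\prod_x(1+\tau'_x)$ (this is Lemma~\ref{lem:expS'}, and your nilpotency argument for it is correct), expand into coloured directed graphs, observe that a monochromatic directed cycle carries sign $-1$, $-1$ or $+1$ according to its colour so that summing over colours yields $-1$ per uncoloured cycle, and then convert the resulting alternating sum over self-avoiding walks decorated by disjoint cycle families into the loop-erased walk probability. The determinant ratios $\det(I-P\restr{\cc V\setminus V(\gamma)})/\det(I-P)$ you write down are exactly the heap-of-cycles generating functions of Proposition~\ref{prop:heapsofpieces}, and the ``standard identity'' you invoke for $\P_a[\LE(\omega)=\gamma]$ is precisely Theorem~\ref{thm:viennot} combined with that proposition; so this is a difference of packaging rather than of substance.

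There is one concrete gap. You assert that the surviving configurations consist of the two self-avoiding walks together with a family of oriented cycles supported on $\cc G\setminus V(\eta_1\cup\eta_2)$. That is not true term by term. At $a$ and $b$ the insertions force the single outgoing-edge factor $(1+\tau'_x)$ to be spent on a path edge, so no cycle can visit those vertices; but at $c$ the insertion $u^{\sss (2)}_c$ only forces colour-$2$ in-degree one and out-degree zero, leaving the outgoing-edge budget of $(1+\tau'_c)$ free to be spent on a colour-$1$ or colour-$3$ edge. Hence there are non-vanishing terms in which a directed cycle passes through $c$, and such a cycle admits only two colourings, with weights $-1$ and $+1$, rather than the three colourings you sum over elsewhere. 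One must therefore observe that these terms cancel in pairs before concluding that the cycle gas avoids $V(\eta_1\cup\eta_2)$ with weight $-1$ per cycle; this cancellation is the content of the paper's Proposition~\ref{claim:num'}. Once that observation is added, the rest of your argument (the degree bookkeeping at $a,b,c$, the per-cycle sign, the prefactor $C'_{bc}=m_c^2 r_b$, and the final identification via Viennot's identities) is correct and matches the paper.
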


Theorem~\ref{thm:Main-informal} follows from the preceding results,
since taking the killing rate $m_{x}^{2}$ to infinity for
$x\in A\subset \cc V$ results in a random walk stopped on the set
$A$. We give more details below after a brief discussion of these
theorems. 

By considering spin systems with additional components it seems
possible to construct observables that encode the multipoint functions of
loop-erased random walk. Variations on our formulas that replace the
`bosonic' variables $u^{\sss (3)}$ and $v^{\sss (3)}$ with standard
bosons also appear possible. We remark that bosonic variables with
square $0$ such as $u^{\sss (3)}$ and $v^{\sss (3)}$ can be viewed as
a way to implement a Nienhuis-type action~\cite{Nienhuis} on general
graphs.

We briefly compare the actions $S$ and $S'$. The spin system defined
by the action $S$ is closer to what is studied
in~\cite{FedorenkoWiese} than the spin system defined by the action
$S'$, as $S$ manifestly inherits the symmetries of the
products
in~\eqref{eq:IP-new}. 
On the other hand, the
exact identity of Theorem~\ref{thm:main-2} arises in part from the
asymmetry in the action $S'$. From the point of view studying
$\fe(\cc K_{d})$ one would presumably like as simple of an action and
observable as possible.  Based on the non-rigorous methods and results
of~\cite{FedorenkoWiese} one is lead to conjecture that a modification
of the action $S$ that removes the six-body term ($S_{6}$ below) would
lead to the same behaviour.  A justification of this conjecture would
be quite interesting. Whether the six-body term in $S'$ is similarly
negligible is not clear to us.

\begin{proof}[Proof of Theorem~\ref{thm:Main-informal}]
  We consider the action $S$; for $S'$ the argument is very
    similar. Let $z\in \partial B_{R}(0)$, and let $(X_{n})$ be a
    random walk on $B_{R}(0)$. It suffices to prove that
  \begin{equation}
    \lim_{m^{2}\to\infty} \P_{0,m^{2}}^{}\cb{y\in \LE((X_{n})_{n<h_{\blacktriangle}})\text{ and
        $X_{h_{\blacktriangle}^{-}}=z$}}
    =
    \P_{0}\cb{y\in \LE((X_{n})_{n\leq h_{z}})},
  \end{equation}
  where the left-hand side is the probability for a random walk killed
  on the boundary with rate $m^{2}$ and the right-hand side is the
  probability for a random walk without killing.  This can be seen by
  coupling the random walks together until they first hit the
  boundary. The probability that $y$ is in the loop-erasure of the
  stopped process is then precisely the probability that $y$ is in the
  loop-erasure of the killed process, conditionally on the killed
  process being killed at the first visit to the boundary.

  Since the probability the killed random walk is killed at its first
  visit to the boundary tends to one as $m^{2}\to\infty$, the claim
  follows from Theorem~\ref{thm:main}.
\end{proof}

The remainder of the paper is organized as follows. In
Section~\ref{sec:proof-theorem} we prove
Theorem~\ref{thm:main}. Theorem~\ref{thm:main-2} has a similar but
somewhat simpler proof, which is given in Section~\ref{sec:thm2}.

\section{Proof of Theorem~\ref{thm:main}}
\label{sec:proof-theorem}

A \emph{coloured graph} is a graph whose edges are each assigned a
non-empty subset of the colours $\{1,2,3\}$. We do not permit
  graphs to have self-loops (i.e., edges $\{x,x\}$), though we will
  make use of self-loops in the proofs below. A connected component is
\emph{monochromatic} if the edges of the component are all assigned
exactly one colour. Given a coloured graph $G$, a vertex $x$, and a
colour $i$, we say that the \emph{coloured vertex} $(x,i)$ is present if
there is vertex $y$ such that the edge $\{x,y\}$ is coloured $i$ in
$G$.

We remark that it is possible to view coloured graphs as multigraphs
in which each repeated edge is assigned a distinct colour. 

\subsection{Graphical representation of the partition function}
\label{sec:comp-part-funct}

For a coloured graph $G$ whose connected components are either
self-avoiding walks or monochromatic cycles, define
\begin{equation}
  \label{eq:Z-weight}
  w_{0}(G) \bydef (-1)^{F(G)}\prod_{xy\in G}\beta_{xy} \prod_{x,i\notin V(G)}r_{x},
\end{equation}
where $F(G)$ is the number of cycles in $G$ coloured either
$1$ or $2$, the first product is over all edges in $G$, and the last
product is over all coloured vertices not in $G$.
\begin{prop}
  \label{prop:Z-graph}
  The partition function~\eqref{eq:modelZ} associated to $\cc G = (\cc
  V, \cc E)$ is given by
  \begin{equation}
    \label{eq:Z-graph}
    Z = \sum_{G}w_{0}(G),
  \end{equation}
  where the sum is over directed coloured subgraphs $G$
  of $\cc G$ whose connected
  components are monochromatic directed cycles.
\end{prop}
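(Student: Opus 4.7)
The plan is to expand $e^S$ as a multinomial series in its local summands, evaluate the Grassmann integral term by term, and match the outcome against $\sum_G w_0(G)$. I would decompose
\begin{equation*}
  S = S_{\mathrm{mass}} + S_{\mathrm{hop}} + S_{\mathrm{high}},
\end{equation*}
where $S_{\mathrm{mass}} \bydef \frac{1}{2}\sum_x r_x \sigma_x\cdot\sigma_x$, $S_{\mathrm{hop}} \bydef \frac{1}{2}\sum_{x,y}\ob{\tau^{\sss (3)}_{xy}-\tau^{\sss (1)}_{xy}-\tau^{\sss (2)}_{xy}}$, and $S_{\mathrm{high}} \bydef -\frac{1}{4}\sum_x T_x^2 + \frac{1}{24}\sum_x T_x^3$ with $T_x \bydef (\nabla\cdot\tau)^{2}_x$. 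The fundamental constraint is that for the Grassmann integral to be non-zero every generator $\eta^{\sss (i)}_x$ and $\xi^{\sss (i)}_x$ for $i=1,2,3,4$ and $x\in\cc V$ must be used exactly once in the integrand.

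First I would analyse $e^{S_{\mathrm{mass}}}$. Since $u^{\sss (1)}_xv^{\sss (1)}_x$, $u^{\sss (2)}_xv^{\sss (2)}_x$, and $\phi_x\psi_x$ are pairwise commuting even elements that each square to zero, the exponential factorises as
\begin{equation*}
  e^{S_{\mathrm{mass}}} = \prod_{x\in\cc V}\ob{1+r_x u^{\sss (1)}_xv^{\sss (1)}_x}\ob{1+r_x u^{\sss (2)}_xv^{\sss (2)}_x}\ob{1+r_x \phi_x\psi_x},
\end{equation*}
which realises a mass-insertion/no-insertion dichotomy at each coloured vertex, with the weight $r_x$ attached to an insertion. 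Next I would expand $e^{S_{\mathrm{hop}}}$. Because each ordered edge monomial is fermionic-even and nilpotent, the expansion becomes a sum over finite subsets of ordered coloured edges of $\cc G$. The saturation constraint then forces the colour-$i$ edges incident to $x$ to have in-degree equal to out-degree there, so that once combined with the mass insertions the colour-$i$ part decomposes into disjoint directed cycles on a subset of vertices, with untouched coloured vertices contributing factors $r_x$. The Grassmann reorderings required to collect the edges of a colour-$i$ cycle ($i=1,2$), combined with the minus sign in $-\tau^{\sss (i)}$, produce a factor $-1$ per such cycle (independently of its length), while colour-$3$ cycles contribute $+1$. This accounts for the sign $(-1)^{F(G)}$ appearing in $w_0(G)$.

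The final ingredient is $S_{\mathrm{high}}$. Here I would exploit that each $A_i \bydef \sum_y\tau^{\sss (i)}_{xy}$ (depending on $x$) satisfies $A_i^2\propto u^{\sss (i)}_xv^{\sss (i)}_x\cdot E^{\sss (i)}_x F^{\sss (i)}_x$ for $i=1,2$ (and an analogous expression for $i=3$ involving $\phi_x\psi_x$), where $E^{\sss (i)}_x, F^{\sss (i)}_x$ are linear forms in the generators at neighbours of $x$. In particular $(A_i^2)^2=0$, which yields the key identities $T_x^2 = 2\sum_{i<j}A_i^2A_j^2$, $T_x^3 = 6\,A_1^2A_2^2A_3^2$, and $T_x^k=0$ for $k\geq 4$, so that $e^{S_{\mathrm{high}}}$ reduces at each vertex to a polynomial of degree $3$ in $T_x$. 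Expanding this polynomial produces multi-colour insertions at individual vertices, whose Grassmann content matches precisely the structure produced by having cycles of several colours pass through the same vertex in the $S_{\mathrm{hop}}$ expansion. A careful inclusion-exclusion argument at each vertex, matching these multi-colour insertions from $S_{\mathrm{high}}$ against the multi-colour configurations from $S_{\mathrm{hop}}$, then shows that only coloured subgraphs with monochromatic connected components survive in the total sum, with weights $w_0(G)$. The main obstacle I anticipate is verifying this vertex-level identity: the multinomial coefficients from $e^S$, the numerical factors $-1/4$ and $1/24$, and the Grassmann orientation signs must all be tracked carefully so that the cancellations occur exactly; after this, assembling the edge weights $\beta_{xy}$, the mass weights $r_x$ for unused coloured vertices, and the cycle-count sign yields~\eqref{eq:Z-graph}.
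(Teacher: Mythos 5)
Your proposal follows essentially the same route as the paper's proof: the decomposition into mass, hopping, and higher-order parts, the nilpotency identities $T_x^2=2\sum_{i<j}A_i^2A_j^2$, $T_x^3=6A_1^2A_2^2A_3^2$, $T_x^k=0$ for $k\ge 4$, and the representation of expansion terms as coloured directed graphs are precisely the content of Lemmas~\ref{lem:expl} and~\ref{lem:expS} and the graph encoding used there. The vertex-level cancellation you flag as the remaining obstacle is exactly what the paper verifies, and it does close: at a vertex carried by two cycle colours the two ways of producing the local monomial contribute $+1$ and $-1$, and at a vertex carried by all three colours the five ways contribute $1-3+2=0$, leaving only vertex-disjoint monochromatic cycles with the sign $(-1)^{F(G)}$ from reordering the fermionic cycle factors.
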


The proof will make use of terms $S_{0}$, $S_{1}$, $S_{4}$ and $S_{6}$ defined by
\begin{align}
  \label{eq:expl-1}
  S_{0}(i;x) &\bydef r_{x}u^{\sss (i)}_{x}v^{\sss (i)}_{x},\\
  S_{1}(i;x,y) &\bydef \beta_{xy}v^{\sss (i)}_{y}u^{\sss (i)}_{x}, \\
  S_{4}(i,j;x,\V{y}) 
             &\bydef \ob{\prod_{k=1}^{4}\beta_{xy_{k}}}\,
               v^{\sss (i)}_{y_{1}}u^{\sss (i)}_{x}
               v^{\sss (i)}_{x}u^{\sss (i)}_{y_{2}}
               v^{\sss (j)}_{y_{3}}u^{\sss (j)}_{x}
               v^{\sss (j)}_{x}u^{\sss (j)}_{y_{4}},\\
  S_{6}(x,\V{y}) &\bydef  \ob{\prod_{k=1}^{6}\beta_{xy_{k}}}\,
                   v^{\sss (1)}_{y_{1}}u^{\sss (1)}_{x}
                   v^{\sss (1)}_{x}u^{\sss (1)}_{y_{2}}
                   v^{\sss (2)}_{y_{3}}u^{\sss (2)}_{x}
                   v^{\sss (2)}_{x}u^{\sss (2)}_{y_{4}}
                   v^{\sss (3)}_{y_{5}}u^{\sss (3)}_{x}
                   v^{\sss (3)}_{x}u^{\sss (3)}_{y_{6}},
\end{align}
where $\V{y}$ represents four- and six-tuples of vertices in $\cc V$
in the last two displays, respectively, while $i$ and $j$ range over
$\{1,2,3\}$. In what follows it will be contextually clear when
$\V{y}$ is a four- or six-tuple.
\begin{lemma}
  \label{lem:expl}
  The action can be rewritten as
  \begin{equation}
    \label{eq:expl}
    S = \sum_{x,i} S_{0}(i;x)
    + \sum_{x,y,i}S_{1}(i;x,y)
    - \sum_{x,\V{y},i,j}S_{4}(i,j;x,\V{y})
    + 2\sum_{x,\V{y}}S_{6}(x,\V{y}).
  \end{equation}
\end{lemma}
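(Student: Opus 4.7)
The plan is to expand each summand of~\eqref{eq:action} and match the result term by term with~\eqref{eq:expl}. The quadratic part of $S$ gives the $S_0$ and $S_1$ contributions essentially by inspection, while the main content is the evaluation of $((\nabla\cdot\tau)_x^2)^2$ and $((\nabla\cdot\tau)_x^2)^3$, which produce the $S_4$ and $S_6$ sums.

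For the quadratic part, I would use~\eqref{eq:IP-new} to see that $\sigma_x^{\sss (i)}\cdot\sigma_x^{\sss (i)} = 2u_x^{\sss (i)}v_x^{\sss (i)}$ and that the symmetry of $\beta$ gives $\tfrac{1}{2}\sum_y\tau_{xy}^{\sss (i)} = \sum_y\beta_{xy}u_x^{\sss (i)}v_y^{\sss (i)}$. For $i=3$ the factors $\phi,\psi$ are even, so this already matches $\sum_y S_1(3;x,y)$; for $i=1,2$ the fermionic anticommutation $u_x^{\sss (i)}v_y^{\sss (i)} = -v_y^{\sss (i)}u_x^{\sss (i)}$ combined with the explicit minus sign in front of $\tau^{\sss (i)}$ in~\eqref{eq:action} again produces $\sum_y S_1(i;x,y)$.

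Setting $B_x^{\sss (i)} := \sum_y\tau_{xy}^{\sss (i)}$, the central computation is the identity
\begin{equation}
  (B_x^{\sss (i)})^2 = 2\sum_{y_1,y_2}\beta_{xy_1}\beta_{xy_2}\,v_{y_1}^{\sss (i)}u_x^{\sss (i)}v_x^{\sss (i)}u_{y_2}^{\sss (i)}
\end{equation}
for each $i=1,2,3$, together with $(B_x^{\sss (i)})^4 = 0$. The first follows from the decomposition $B_x^{\sss (i)} = a+b$ with $a := \sum_y\beta_{xy}u_x^{\sss (i)}v_y^{\sss (i)}$ and $b := \sum_y\beta_{xy}u_y^{\sss (i)}v_x^{\sss (i)}$: both are even, so $ab=ba$; both square to zero using $(u_x^{\sss (i)})^2 = (v_x^{\sss (i)})^2 = 0$ (for $i=3$ these are $\phi_x^2 = \psi_x^2 = 0$); and a short rearrangement of the four factors in $ab$ brings it into the stated form. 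The fourth-power vanishing follows similarly, by moving the two copies of $u_x^{\sss (i)}$ in $(B_x^{\sss (i)})^2\cdot(B_x^{\sss (i)})^2$ next to each other and invoking $(u_x^{\sss (i)})^2 = 0$.

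With these in hand, in $((\nabla\cdot\tau)_x^2)^2 = \sum_{i,j}(B_x^{\sss (i)})^2(B_x^{\sss (j)})^2$ the diagonal $i=j$ terms vanish and each off-diagonal product evaluates to $4\sum_{\V y}S_4(i,j;x,\V y)$, so multiplication by the prefactor $-\tfrac{1}{4}$ from~\eqref{eq:action} yields the $S_4$ line of~\eqref{eq:expl}. Similarly, $((\nabla\cdot\tau)_x^2)^3$ receives contributions only from the $3!=6$ permutations of $(1,2,3)$, and by the commutativity of even elements all six equal $(B_x^{\sss (1)})^2(B_x^{\sss (2)})^2(B_x^{\sss (3)})^2 = 8\sum_{\V y}S_6(x,\V y)$; with the prefactor $\tfrac{1}{24}$ this gives $2\sum_{x,\V y}S_6(x,\V y)$. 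The one delicate step is the Grassmann sign tracking for $i=1,2$ in establishing the quadratic and quartic identities above; once those are in hand, everything else is a matter of matching combinatorial prefactors.
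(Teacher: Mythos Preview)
Your proposal is correct and is precisely the algebraic computation the paper leaves implicit: the paper's entire proof is the single sentence ``This is an algebraic reformulation of~\eqref{eq:action}.'' One minor imprecision: the identity $\tfrac{1}{2}\sum_y\tau_{xy}^{\sss (i)} = \sum_y\beta_{xy}u_x^{\sss (i)}v_y^{\sss (i)}$ is false for fixed $x$; it only holds after the outer $\sum_x$ is included (so that the two terms of $\tau_{xy}^{\sss (i)}$ can be exchanged by relabelling $x\leftrightarrow y$), but your conclusion is unaffected.
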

\begin{proof}
  This is an algebraic reformulation of~\eqref{eq:action}.
\end{proof}

\begin{lemma}
  \label{lem:expS}
  The exponential of the action can be rewritten as
  \begin{equation}
    \label{eq:factor}
    e^{S} = \prod_{x, i}(1+S_{0}(i;x))
    \prod_{x,y,i}(1+S_{1}(i;x,y)) 
    \prod_{x,\V{y},i,j}(1-S_{4}(i,j;x,\V{y}))
    \prod_{x,\V{y}}(1+2S_{6}(x,\V{y})).
  \end{equation}
\end{lemma}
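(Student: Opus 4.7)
The plan is to exploit the nilpotency of individual Grassmann monomials. First I would observe that each of the summands $S_{0}(i;x)$, $S_{1}(i;x,y)$, $S_{4}(i,j;x,\V{y})$ and $S_{6}(x,\V{y})$ appearing in Lemma~\ref{lem:expl} is an \emph{even} element of the Grassmann algebra, since each is built from an even number of the anticommuting generators $\eta^{\sss (i)}_{x},\xi^{\sss (i)}_{x}$ (recall also that $\phi_{x}=\eta^{\sss (3)}_{x}\xi^{\sss (3)}_{x}$ and $\psi_{x}=\eta^{\sss (4)}_{x}\xi^{\sss (4)}_{x}$ are themselves even). Even elements commute pairwise, so the exponential of the sum factors as
\begin{equation*}
  e^{S} = \prod_{x,i} e^{S_{0}(i;x)} \prod_{x,y,i} e^{S_{1}(i;x,y)} \prod_{x,\V{y},i,j} e^{-S_{4}(i,j;x,\V{y})} \prod_{x,\V{y}} e^{2 S_{6}(x,\V{y})}.
\end{equation*}

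The next step is to verify that each individual monomial squares to zero, so that each exponential above truncates at first order. For $S_{0}(i;x)$ and $S_{1}(i;x,y)$ with $i\in\{1,2\}$ this is immediate from $(\eta^{\sss (i)}_{x})^{2}=(\xi^{\sss (i)}_{x})^{2}=0$; for $i=3$ it follows from $\phi_{x}^{2}=\psi_{x}^{2}=0$. For $S_{4}(i,j;x,\V{y})$ the monomial already vanishes when $i=j$ (it contains $(u_{x}^{\sss (i)})^{2}$), so one may restrict to $i\neq j$; the remaining product contains each of $u^{\sss (i)}_{x}, v^{\sss (i)}_{x}, u^{\sss (j)}_{x}, v^{\sss (j)}_{x}$ exactly once, and squaring therefore duplicates each and yields zero. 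The same counting handles $S_{6}$, which contains $u^{\sss (i)}_{x},v^{\sss (i)}_{x}$ exactly once for each $i=1,2,3$.

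Combining these two observations gives $e^{cS_{k}}=1+cS_{k}$ for each monomial $S_{k}$ with scalar coefficient $c\in\{1,-1,2\}$, whence the displayed factorization becomes~\eqref{eq:factor} after bookkeeping the signs. There is no real obstacle here; the only subtlety worth flagging is that the ``bosonic'' variables $u^{\sss (3)}=\phi_{x}$ and $v^{\sss (3)}=\psi_{x}$ are composites of two fermions, so both the evenness needed for commutativity and the nilpotency needed for the truncation must be checked in the $i=3$ sector separately, but both follow at once from $\phi_{x}^{2}=\psi_{x}^{2}=0$.
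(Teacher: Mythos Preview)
Your proof is correct and follows essentially the same approach as the paper: the paper's argument is simply that each summand $\tilde S$ is even (so the exponential of the sum factors) and nilpotent (so $e^{\tilde S}=1+\tilde S$). You have supplied the explicit verifications of evenness and nilpotency that the paper leaves implicit, including the $i=3$ sector, but the underlying idea is identical.
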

\begin{proof}
  Each summand $\tilde S$ in~\eqref{eq:expl} is nilpotent. In particular,
  $\exp(\tilde S)=1+\tilde S$. Since each $\tilde S$ is even the exponential of
  the sum is the product of the exponentials.
\end{proof}

\begin{proof}[Proof of Proposition~\ref{prop:Z-graph}]
  We represent the terms in the expansion of the products
  in~\eqref{eq:factor} as coloured graphs with self-loops by
  viewing each monomial $v^{\sss (i)}_{y} u^{\sss (i)}_{x}$ as a
  directed edge with colour $i$ from $x$ to $y$. If $x=y$ this is a
  self-loop. Explicitly,
  \begin{itemize}
  \item terms $S_{0}(i;x)$ give self-loops $(x,x)$ of colour $i$,
  \item terms $S_{1}(i;x,y)$ give directed edges $(x,y)$ of colour
    $i$,
  \item terms $S_{4}(i,j;x,\V{y})$ give directed edges $(x,y_{1})$ and
    $(y_{2},x)$ of colour $i$ and $(x,y_{3})$ and $(y_{4},x)$ of
    colour $j$,
  \item terms $S_{6}(x,\V{y})$ give directed edges $(x,y_{1})$ and
    $(y_{2},x)$ of colour $1$, $(x,y_{3})$ and $(y_{4},x)$ of colour
    $2$, and $(x,y_{5})$, $(y_{6},x)$ of colour $3$.
  \end{itemize}

  By definition, $\int e^{S}$ is the coefficient of the top degree
  term of $e^{S}$. By the above, this coefficient is a sum over
  coloured directed graphs $G$ that use every vertex of $\cc G$ with
  every possible colour. Note that such a graph $G$ can arise in
  multiple ways, i.e., from different choices of terms in the
  expansions of the products in~\eqref{eq:factor}. For any such choice
  the weight of the graph has a factor $r_{x}$ for each self-loop at
  $x$; a factor $\beta_{xy}$ for each coloured directed edge $(x,y)$;
  a numerical factor determined by the coefficients of the $S_{i}$;
  and a sign determined by the re-ordering of the Grassmann
  variables. The last two considerations depend on the choices of
  terms in the expansions of the products in~\eqref{eq:factor}.

  The next step is to characterize the graphs $G$ that give a non-zero
  contribution. It suffices to characterize the contributing graphs up to
  self-loops, whose existence is implied by the fact that only top
  degree terms contribute to the integral. In what follows we
  implicitly discuss only edges that are not self-loops. We first
  prove that all graphs with non-zero weight are unions of disjoint
  cycles. It suffices to establish that
  \begin{enumerate}
  \item the coloured in- and out-degree of each vertex in $G$ is the same in
    any graph with non-zero weight,
  \item if the in-degree of some vertex in $G$ is two, then the weight is zero,
  \item if the in-degree of some vertex in $G$ is three, then the weight is zero.
  \end{enumerate}
  The first claim is immediate since self-loops add coloured in- and
  out-degree equally.

  For the second claim, fix a graph $G$ and a vertex $x$ of in- and
  out-degree two. Let $\V{y}$ be the 4-tuple of vertices to which $x$
  connects. Two of the edges must be of colour $i$, and two of colour
  $j$. There are two ways (fixing all other choices) to create the
  graph $G$ that differ in how the monomials associated to edges
  containing $x$ are chosen. The possibilities are
  $S_{1}(i;x,y_{1})S_{1}(i;x,y_{2}) S_{1}(j;x,y_{3}) S_{1}(j;x,y_{4})
  $ or $-S_{4}(i,j;x,\V{y})$, which sum to zero.

  For the third claim, we argue as above. There are now five
  possibilities for how the edges connecting $x$ to the vertices in
  the six-tuple $\V{y}$ could be chosen:
  \begin{itemize}
  \item a product of six $S_{1}(i;x,y)$,
  \item a product of one $-S_{4}(i,j;x,\V{y}')$ with two $S_{1}(k;x,y)$
    for $y\in \V{y}\setminus \V{y}'$ and $k = \{1,2,3\}\setminus
    \{i,j\}$,
  \item a factor $S_{6}(x,\V{y})$.
  \end{itemize}
  The first option contributes weight $1$, the last weight $+2$, while
  the middle three contribute weight $-1$. Thus these possibilities
  sum to zero.

  To complete the proof of the proposition, we note that each
  self-loop has weight $r_{x}$, and there is one such factor for each
  vertex $x$ and each colour $i$ that is not contained in some
  cycle. Thus all that remains is to justify the sign
  $(-1)^{F(G)}$. Each directed cycle $(\omega_1,\dots,\omega_k)$
  is associated to a product
  $S_1(i;\omega_1,\omega_2)S_1(i;\omega_2,\omega_3) \dots
  S_1(i;\omega_k,\omega_1)$, and each individual $S_{i}$ term
  commutes, i.e., letting $\omega_{k+1}=\omega_{1}$,
  \begin{equation}
    S_1(i;\omega_k,\omega_{k+1}) \dots S_1(i;\omega_1,\omega_2)
    =\ob{\prod_{i=1}^{\abs{\omega}}\beta_{\omega_{i}}\beta_{\omega_{i+1}}}
    v^{\sss (i)}_{\omega_{k+1}}u^{\sss (i)}_{\omega_{k}}v^{\sss
        (i)}_{\omega_{k}}u^{\sss (i)}_{\omega_{k-1}}\dots v^{\sss (i)}_{\omega_{2}}u^{\sss (i)}_{\omega_{1}}.
  \end{equation}
  Moving the final factor of $u^{\sss (i)}_{\omega_{1}}$ to the front contributes a
  factor $-1$ if and only if the colour $i\in \{1,2\}$ as in this case
  $u^{\sss (i)}_{\omega_{1}}$ is odd, while $u^{\sss
    (3)}_{\omega_{1}}$ is even. 
\end{proof}

An immediate consequence of this coloured graph representation is the
following colourless variant. For a directed cycle $C$ define
\begin{equation}
  \label{eq:wt-cycle}
  w(C) \bydef \prod_{xy\in C}\frac{\beta_{xy}}{r_{x}} = \prod_{xy\in C}
  \frac{\beta_{xy}}{m_{x}^{2}+\sum_{s}\beta_{xs}},
\end{equation}
and let $\cc L$ denote the collection of graphs whose connected components are
all directed cycles. Equivalently, will may think of an element
$L\in \cc L$ as a set of disjoint directed cycles.
\begin{corollary}
  \label{cor:Z}
  The partition function $Z$ can be expressed as
  \begin{equation}
    \label{eq:Z-cycle}
    Z = \ob{\prod_{s\in \mathcal V}r_{s}^{3}}\sum_{L\in \cc L}(-1)^{\abs{L}}\prod_{C\in L}w(C).
  \end{equation}
\end{corollary}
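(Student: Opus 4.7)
The plan is to reorganize the sum in Proposition~\ref{prop:Z-graph} by first grouping terms according to the underlying uncoloured cycle structure $L\in\cc L$, and then summing out the colour assignments.

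Fix $L\in\cc L$. Since the components of $G$ are required to be monochromatic directed cycles and the cycles are pairwise vertex-disjoint, the coloured subgraphs $G$ whose underlying uncoloured graph is $L$ are in bijection with maps $c:L\to\{1,2,3\}$ assigning a colour to each cycle. First I would rewrite $w_{0}(G)$ as a product over cycles. The edge product depends only on the unoriented cycles, namely $\prod_{xy\in G}\beta_{xy}=\prod_{C\in L}\prod_{xy\in C}\beta_{xy}$. For the vertex factor, I would observe that because the cycles in $L$ are vertex-disjoint, each $x\in V(L)$ lies in exactly one cycle and so is `coloured' by exactly one $i$; hence
\begin{equation}
\prod_{x,i\notin V(G)} r_{x} \;=\; \prod_{x\in\cc V}r_{x}^{3}\cdot \prod_{x\in V(L)}r_{x}^{-1} \;=\; \prod_{x\in\cc V}r_{x}^{3}\cdot\prod_{C\in L}\prod_{xy\in C}r_{x}^{-1},
\end{equation}
where in the last step I use that each vertex of a directed cycle is the tail of exactly one of its edges. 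Combining these two identities gives
\begin{equation}
w_{0}(G)=(-1)^{F(G)}\,\Big(\prod_{x\in\cc V}r_{x}^{3}\Big)\,\prod_{C\in L}w(C),
\end{equation}
with $w(C)$ as in~\eqref{eq:wt-cycle}.

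Next I would sum over the colourings $c$. The quantity $F(G)$ depends on $c$ only through $F(G)=\#\{C\in L:c(C)\in\{1,2\}\}$, so the sign factorises and
\begin{equation}
\sum_{c:L\to\{1,2,3\}}(-1)^{F(G)}=\prod_{C\in L}\bigl(-1-1+1\bigr)=(-1)^{\abs{L}}.
\end{equation}
Plugging this into Proposition~\ref{prop:Z-graph} and pulling the global factor $\prod_{s\in\cc V}r_{s}^{3}$ outside the sum yields~\eqref{eq:Z-cycle}.

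There is no serious obstacle; the only point requiring care is bookkeeping the $r_{x}$ factors, and in particular verifying that vertex-disjointness of the cycles is what allows the factor $\prod_{x\in V(L)} r_x^{-1}$ to be distributed cleanly among the cycles of $L$. Once that accounting is done, the colour sum over $\{1,2,3\}$ collapses to the signed weight $(-1)^{\abs{L}}$, which is the combinatorial content of the corollary.
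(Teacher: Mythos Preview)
Your proposal is correct and follows essentially the same approach as the paper: sum over the colourings of each cycle to obtain the per-cycle weight $-1-1+1=-1$, and then account for the $r_x$ vertex factors. The paper's proof is more terse (two sentences), but the content is identical; your careful bookkeeping of the $r_x$ factors via vertex-disjointness is exactly what the paper's sentence about self-loops encodes.
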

\begin{proof}
  This follows from Proposition~\ref{prop:Z-graph} by summing
  over the possible colours of each component, which yields a weight
  of $1-1-1=-1$ for each directed cycle. The vertex weights follow
  since every vertex in a cycle is also in exactly two coloured self-loops,
  and every vertex not in a cycle is in exactly three coloured self-loops. 
\end{proof}

\subsection{Computing $U(a,b,c)$, I}
\label{sec:computing-ux-y}

In this section we compute a graphical formula for the numerator
of~\eqref{eq:obs}, i.e., for the Grassmann integral of
\begin{equation}
  \label{eq:num-integrand}
  u^{\sss (2)}_{c}v^{\sss (2)}_{b} u^{\sss (3)}_{b}v^{\sss (3)}_{b} u^{\sss (1)}_{b}v^{\sss (1)}_{a}
  e^{S},
\end{equation}
when $a,b,c\in\cc V$ are all distinct. 
Towards this goal, we introduce two weighted sums of coloured graphs,
where the weight $w_{0}(G)$ is given by~\eqref{eq:Z-weight}: 
\begin{equation}
  \label{eq:GammaTheta}
  \Gamma(a,b,c)\bydef \sum_{G}\frac{w_{0}(G)}{r_{b}}, \qquad \Theta(a,b,c) \bydef
  \sum_{G}\sum_{C} \frac{w_{0}(G\cup C)}{r_{b}}.
\end{equation}
We will see in the following that the main contribution comes from
$\Gamma$, and $\Theta$ will be an error term.  The sum defining
$\Gamma(a,b,c)$ is a sum over coloured subgraphs $G$ which are unions
of a coloured self-avoiding walk $\gamma$ and coloured directed cycles
subject to the following conditions, in which $V(\gamma)$ denotes the
vertices contained in $\gamma$:
\begin{enumerate}
\item $\gamma$ is a self-avoiding walk from $a$ to $c$ that passes
  through $b$, of colour $1$ from $a$ to $b$, and of colour $2$ from
  $b$ to $c$,
\item the directed cycles are pairwise disjoint,
\item directed cycles of colour 1 do not intersect
  $V(\gamma) \setminus \{c\}$,
\item directed cycles of colour 2 do not intersect
  $V(\gamma) \setminus \{a\}$,
\item directed cycles of colour $3$ do not intersect
  $V(\gamma)\setminus \{a,c\}$, and contain at most one of $a$ and $c$.
\end{enumerate}
The sum defining $\Theta$ is a sum over coloured directed cycles $C$
of colour $3$ that contain both $a$ and $c$ and coloured graphs $G$
such that the coloured graph $C \cup G$ satisfies (i)--(iv) above and
\begin{itemize}
\item[(v')] directed cycles of colour $3$ do not intersect
  $V(\gamma)\setminus \{a,c\}$.
\end{itemize}

\begin{prop}
  \label{prop:num}
  For distinct $a,b,c\in \cc V$,
  \begin{equation}
    \label{eq:num}
    \int
    u^{\sss (2)}_{c}v^{\sss (2)}_{b} u^{\sss (3)}_{b}v^{\sss (3)}_{b} u^{\sss (1)}_{b}v^{\sss (1)}_{a}
    e^S
    = \Gamma(a,b,c) + \Theta(a,b,c).
  \end{equation}
\end{prop}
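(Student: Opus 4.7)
The plan is to follow the strategy of Proposition~\ref{prop:Z-graph}: expand $e^{S}$ via Lemma~\ref{lem:expS}, multiply by the insertion, and let the Grassmann integral select coloured-graph configurations. The six-tuple $u^{\sss(2)}_{c}v^{\sss(2)}_{b}u^{\sss(3)}_{b}v^{\sss(3)}_{b}u^{\sss(1)}_{b}v^{\sss(1)}_{a}$ plays the role of virtual half-edges in the expansion: at $a$ it supplies $v^{\sss(1)}_{a}$ (a virtual colour-$1$ in-half); at $c$ it supplies $u^{\sss(2)}_{c}$ (a virtual colour-$2$ out-half); and at $b$ it supplies $u^{\sss(1)}_{b}$, $v^{\sss(2)}_{b}$, and the product $u^{\sss(3)}_{b}v^{\sss(3)}_{b}=\phi_{b}\psi_{b}$, which completely fills the colour-$3$ coloured vertex at $b$ in the manner of a self-loop, but without the weight $r_{b}$.

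I would then repeat the degree analysis of Proposition~\ref{prop:Z-graph} with these virtual half-edges in place. At vertices $x\notin\{a,b,c\}$ the original arguments apply verbatim. At $a$, the insertion rules out any $e^{S}$ term that would duplicate $v^{\sss(1)}_{a}$, eliminating $S_{0}(1;a)$, every colour-$1$ in-edge at $a$, every $S_{4}(i,j;a,\cdot)$ with $1\in\{i,j\}$, and $S_{6}(a,\cdot)$. Analogous prohibitions apply at $c$ for colour~$2$, and at $b$ they forbid every $S_{4}$ or $S_{6}$ at $b$, every colour-$3$ factor touching $b$, every colour-$1$ self-loop or in-edge at $b$, and every colour-$2$ self-loop or out-edge at $b$. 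Within the remaining factors the degree-$2$ cancellation of Proposition~\ref{prop:Z-graph} still pairs four $S_{1}$'s against $-S_{4}$ at intermediate vertices (via $S_{4}(1,2;x,\cdot)$), and at $a$ and $c$ (via $S_{4}(2,3;a,\cdot)$ and $S_{4}(1,3;c,\cdot)$, respectively). Degree-$3$ configurations cannot arise at $a$, $b$, or $c$, because the insertion restricts the number of colours with full in/out contributions from $e^{S}$ at each such vertex to at most two.

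The surviving coloured graphs are therefore $G=\gamma\cup(\text{pairwise disjoint cycles})$, where $\gamma$ is a SAW from $a$ through $b$ to $c$ coloured as in~(i) and the cycles satisfy (ii)-(v), together with the additional family in which a colour-$3$ cycle $C$ passes through both $a$ and $c$, contributing the $\Theta$ term via~(v'). Vertex-disjointness of the colour-$1$ and colour-$2$ portions of $\gamma$ at interior vertices is enforced by the intermediate-vertex $S_{4}(1,2;x,\cdot)$ cancellation. The weight of each configuration is then $w_{0}(G)/r_{b}$: each edge contributes $\beta_{xy}$, each coloured self-loop $(x,i)\notin V(G)$ contributes $r_{x}$ via the corresponding $S_{0}$ factor, and the division by $r_{b}$ compensates for the coloured vertex $(b,3)$ being filled by the insertion rather than by $S_{0}(3;b)=r_{b}u^{\sss(3)}_{b}v^{\sss(3)}_{b}$.

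The main obstacle is verifying that the sign produced by the Grassmann reordering matches $(-1)^{F(G)}$, with no additional sign contribution from the SAW $\gamma$. The argument parallels the cyclic reordering at the end of the proof of Proposition~\ref{prop:Z-graph}: each colour-$1$ or colour-$2$ cycle produces $-1$ from moving an odd $u^{\sss(i)}$ past the remaining cycle factors. For the SAW $\gamma$, the anticommuting factors of the $S_{1}$-chain together with the insertion factors $v^{\sss(1)}_{a}$, $u^{\sss(1)}_{b}$, $v^{\sss(2)}_{b}$, $u^{\sss(2)}_{c}$ must be carefully reordered; the expectation is that these signs cancel so that each SAW segment contributes $+1$, leaving only $(-1)^{F(G)}$ from the cycles as in $w_{0}(G)$. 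Executing this sign bookkeeping, together with the auxiliary $S_{4}$ cancellations at the special vertices, constitutes the substantive step beyond the proof of Proposition~\ref{prop:Z-graph}.
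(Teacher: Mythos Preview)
Your proposal is correct and follows essentially the same approach as the paper's own proof: expand $e^{S}$ via Lemma~\ref{lem:expS}, interpret the insertion as fixing certain half-edges at $a,b,c$, reuse the degree-one/two/three cancellations of Proposition~\ref{prop:Z-graph} at generic vertices, and redo the degree analysis at $a,b,c$ with the modified constraints. The paper carries out exactly the sign computation you flag as the remaining substantive step, writing the SAW factors as a single ordered product and moving $v^{\sss(1)}_{\omega_{1}}$ through to confirm the SAW contributes no extra sign.
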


\begin{proof}[Proof of Proposition~\ref{prop:num}]
  Recall $e^{S}$ has the product form given by
  Lemma~\ref{lem:expS}. Our first step is to expand this product and
  see which terms combine with the prefactor
  in~\eqref{eq:num-integrand} to give a top-degree term. As in the
  proof of Proposition~\ref{prop:Z-graph} we characterize the
  contributing coloured graphs up to self-loops. After doing this we
  will justify the claimed weight.

  We first claim that, for each vertex $y\notin \{a,b,c\}$,
  \begin{enumerate}
  \item the coloured in- and out-degrees of $y$ are the same in any coloured
    graph $G$ with non-zero weight, and
  \item if the total in-degree of $y$ is two or three, then the weight is zero.
  \end{enumerate}
  The proof of these claims is exactly as in the proof of
  Proposition~\ref{prop:Z-graph} (this proof only used the
  combinatorics of the terms that arise from expanding $e^{S}$).

  We next consider the vertices $a,b,c$:
  \begin{enumerate}
  \item $a$ must have colour $1$ out-degree one and in-degree zero; $c$ must have colour
    $2$ in-degree one and out-degree zero; $b$ must have colour $1$ in-degree one, colour
    $2$ out-degree one, and colour $3$ in- and out-degree zero.
  \item Excepting the above points, the coloured in- and out-degrees
    of $a$ and $c$ must be equal,
   \item $a$ cannot have out-degree three, and $c$ cannot have
     in-degree three. 
  \end{enumerate}
  The factor
  $u^{\sss (2)}_{c}v^{\sss (2)}_{b} u^{\sss (3)}_{b}v^{\sss (3)}_{b}
  u^{\sss (1)}_{b}v^{\sss (1)}_{a}$ enforces the first claim: for
  example, $a$ must have colour $1$ out-degree one since the only
  terms in $e^{S}$ that contain $u_{a}^{\sss (1)}$ but not
  $v_{a}^{\sss (1)}$ are of the form $S_{1}(1;a,\cdot)$. The claims
  for $b$ and $c$ follow similarly. The second claim follows since
  self-loops contribute in- and out-degree one. The last claim follows
  by arguing as in the proof of Proposition~\ref{prop:Z-graph}; we
  give the argument for $a$ (it is similar for $c$). If $a$ had
  out-degree three, then it must have in- and out-degree one for
  colours $2$ and $3$. Let $\V{y}$ be the four-tuple of vertices to
  which $a$ is connected, by colour $2$ to $y_{1},y_{2}$, and colour
  $3$ to $y_{3},y_{4}$. There are two ways this can occur:
  $S_{1}(2;a,y_{1}) S_{1}(2;a,y_{2}) S_{1}(3;a,y_{3})
  S_{1}(3;a,y_{4})$ or $-S_{4}(2,3;a,\V{y})$, and these sum to zero.

  The preceding facts establish the claimed coloured graph
  structure. All that remains is to justify the weight. This is nearly
  as in the proof of Proposition~\ref{prop:Z-graph}: each self-loop at
  $y\in \cc V$ contributes a factor $r_{y}$, and each cycle coloured
  $1$ or $2$ contributes a factor $-1$. Note that there is never a
  self-loop at $b$ of colour 3 due to the factor
  $u^{(3)}_{b}v^{(3)}_{b}$. Up to the sign arising from the edges in
  $\gamma$, this gives the weight $r_{b}^{-1}w_{0}(G)$. All that
  remains is to show that the self-avoiding walk $\gamma$ from $a$ to
  $c$ does not contribute a sign. If $k^{\star}$ is the index such
  that $\omega_{k^{\star}}=b$, then this follows from observing that
  the weight of the walk has the form
  \begin{equation}
    \underbrace{u^{\sss (2)}_{\omega_{k}}v^{\sss (1)}_{\omega_{1}}}_{\mathrm{I}} v^{\sss
      (2)}_{\omega_{k}}u^{\sss (2)}_{\omega_{k-1}} v^{\sss
      (2)}_{\omega_{k-1}}u^{\sss (2)}_{\omega_{k-2}}\dots v^{\sss
      (2)}_{\omega_{k^{\star}+1}} u^{\sss (2)}_{\omega_{k^{\star}}} \underbrace{v^{\sss
      (2)}_{\omega_{k^{\star}}}u^{\sss (1)}_{\omega_{k^{\star}}}}_{\mathrm{II}}
    v^{\sss (1)}_{\omega_{k^{\star}}} u^{\sss
      (1)}_{\omega_{k^{\star}-1}}\dots v^{\sss (1)}_{\omega_{2}}
    u^{\sss (1)}_{\omega_{1}},
  \end{equation}
  where the factors labelled $\mathrm{I}$ and $\mathrm{II}$ come from
  the prefactor in the integrand. Moving the initial factor of
  $v^{\sss (1)}_{\omega_{1}}$ through to the end proves the claim.
\end{proof}

Given a self-avoiding walk $\gamma$, let $\cc L_{\gamma}$ be the
collection of sets of pairwise disjoint oriented cycles that are all vertex-disjoint
from $\gamma$.  Further, define
\begin{equation}
  \label{eq:saw-weight}
  w(\gamma) \bydef
  \prod_{i=1}^{\abs{\omega}-1}\frac{\beta_{\omega_{i}\omega_{i+1}}}{r_{\omega_{i}}}
  = \prod_{i=1}^{\abs{\omega}-1}
  \frac{\beta_{\omega_{i}\omega_{i+1}}}{m^{2}_{\omega_{i}}+
    \sum_{y\in\cc V}\beta_{\omega_{i}y}}.
\end{equation}

\begin{prop}
  \label{cor:num}
  For any three distinct vertices $a,b,c$, 
  \begin{equation}
    \label{eq:num1}
    \Gamma(a,b,c)
  = 
  \frac{\prod_{x}r_{x}^{3}}{r_{c}r_{b}^{2}} \sum_{\gamma\in
    \saw(a,c)}\sum_{L\in\cc L_{\gamma}}
  (-1)^{\abs{L}}w(\gamma)  1_{b\in \gamma}\prod_{C\in L}w(C),
  \end{equation}
  where $\saw(a,c)$ denotes the set of self-avoiding walks from
    $a$ to $c$.
\end{prop}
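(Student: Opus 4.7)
The plan is to start from Proposition~\ref{prop:num} and reduce the coloured-graph sum defining $\Gamma(a,b,c)$ to the uncoloured sum on the right-hand side of~\eqref{eq:num1} by (i) decomposing each contributing graph $G$ as a disjoint union of the coloured self-avoiding walk $\gamma$ from $a$ to $c$ through $b$ and a collection of coloured directed cycles satisfying the intersection constraints (i)--(v), and (ii) summing out the cycle colours last. Because different cycles and the SAW occupy disjoint coloured edges, the weight $w_{0}(G)/r_{b}$ factorises over these pieces, so the cycle colour sums can be performed one cycle at a time.

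First I would verify the SAW prefactor. Rewriting $\prod_{(x,i)\notin V(G)}r_{x} = \ob{\prod_{x}r_{x}^{3}}\prod_{(x,i)\in V(G)}r_{x}^{-1}$, the coloured vertices occupied by $\gamma$ alone are $(a,1)$, $(b,1)$, $(b,2)$, $(c,2)$, together with $(x,1)$ for the internal vertices on the $a$-to-$b$ segment and $(x,2)$ for the internal vertices on the $b$-to-$c$ segment. These contribute a factor $r_{a}^{-1}r_{b}^{-2}r_{c}^{-1}\prod_{x\in V(\gamma)\setminus\{a,b,c\}}r_{x}^{-1}$. Multiplying by $\prod_{x}r_{x}^{3}$, the explicit $1/r_{b}$ in the definition of $\Gamma$, and by the edge weights $\prod_{xy\in\gamma}\beta_{xy}$, one recovers exactly $\frac{\prod_{x}r_{x}^{3}}{r_{c}r_{b}^{2}}\,w(\gamma)\,1_{b\in\gamma}$.

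Next I would sum each cycle $C$ over its allowed colours. The edge factor $\prod_{xy\in C}\beta_{xy}$ and the $r$-factor $\prod_{x\in V(C)}r_{x}^{-1}$ are independent of the colour, while the sign from $(-1)^{F(G)}$ is $-1$ for colours $1,2$ and $+1$ for colour $3$. Three cases arise. If $C$ is vertex-disjoint from $V(\gamma)$, all three colours are allowed and the signs sum to $-1$, producing a net factor $-w(C)$. If $C$ passes through exactly one of $a,c$, then only two colours are allowed (one from $\{1,2\}$ and colour $3$) and the signs cancel, so the contribution vanishes. If $C$ passes through both $a$ and $c$, or through any vertex of $V(\gamma)\setminus\{a,c\}$, no colour is allowed by (iii)--(v), and $C$ is forbidden. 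Taking the product over the pairwise disjoint cycles of $L$, only configurations with $L\in\cc L_{\gamma}$ survive, and they contribute $(-1)^{|L|}\prod_{C\in L}w(C)$. Combining with the SAW prefactor yields~\eqref{eq:num1}.

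The main obstacle I anticipate is the careful book-keeping of $r_{x}$ factors at the special vertices $a,b,c$, in particular verifying that the explicit $1/r_{b}$ in the definition of $\Gamma$ is exactly what compensates for the absence of a colour-$3$ self-loop at $b$ forced by the prefactor $u^{\sss(3)}_{b}v^{\sss(3)}_{b}$ in the integrand of Proposition~\ref{prop:num}. Once the weight has been factorised correctly, the colour-cancellation argument itself is essentially repeated use of the identities $1-1-1=-1$ and $-1+1=0$, and is routine.
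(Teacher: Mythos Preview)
Your proposal is correct and follows essentially the same approach as the paper's own proof: both arguments sum out the cycle colours in the definition of $\Gamma$ to obtain the $-1$ per surviving cycle via $-1-1+1=-1$, observe the $-1+1=0$ cancellation for cycles touching exactly one of $a,c$, and then carefully account for the $r_{x}$ factors at the special vertices $a,b,c$ to obtain the prefactor $\prod_{x}r_{x}^{3}/(r_{c}r_{b}^{2})$. One small imprecision: you say you start from Proposition~\ref{prop:num}, but in fact you work (correctly) directly from the \emph{definition} of $\Gamma$ and its constraints (i)--(v); Proposition~\ref{prop:num} is the statement that the Grassmann integral equals $\Gamma+\Theta$, which is not used here.
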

\begin{proof}
  The identity follows from the definition of $\Gamma$ by summing over
  the possible colourings. The self-avoiding walk must be
  vertex-disjoint from all directed cycles since any cycle
  intersecting exactly one endpoint can only have two possible
  colours, one of which is 3, and hence has total weight $1-1=0$.

  What remains is to explain the prefactor.  Recall the definition
  \eqref{eq:Z-weight} of $w_0$. The first product
  $\prod_{xy\in G} \beta_{xy}$ contains exactly those $\beta_{xy}$
  that appear in $w(\gamma)$ and in $\prod_C w(C)$. We are left with
  treating the $r$ factors, that is, for each vertex $x$ we should
  count the numbers of colours $i$ for which $(x,i)$ is not present in
  $G$.

  For $x\notin G$ there is a contribution of $r_x^3$. If
  $x\in G\setminus \{a,b,c\}$, it has a single incoming edge and a
  single outgoing edge, both of the same colour, leaving two colours
  that are not present. This is a contribution of $r_x^2$. Since the
  weight $w$ (either $w(\gamma)$ if $x\in\gamma$ or $w(C)$ if
  $x\in C$) contains a factor $r_x^{-1}$ we are left with the
  prefactor $r_x^{3}$.

  The vertex $a$ has exactly one outgoing edge of colour $1$, leading
  to a contribution of $r_a^{2}$. Since $w(\gamma)$ contains a factor
  $r_a^{-1}$ we are left with the prefactor $r_a^3$. The vertex $b$
  has adjacent edges of colours $1$ and $2$ but not of colour
  $3$. Hence only $(b,3)$ is not present in $G$, and the corresponding
  factor is $r_b$. Since $w(\gamma)$ contains a factor $r_b^{-1}$ and
  in the definition~\eqref{eq:GammaTheta} of $\Gamma$ there is an
  additional $r_b^{-1}$, we are left with the prefactor
  $r_b$. Finally, for the vertex $c$, both $(c,1)$ and $(c,3)$ are not
  present in $G$, yielding a factor $r_c^2$. Since $\gamma$ does not
  take any steps from $c$, there is no additional factor coming from
  $w(\gamma)$, and we are left with $r_c^2$.
\end{proof}

\subsection{Heaps of pieces and loop-erased walk}
\label{sec:HoP}

An important combinatorial structure, closely related to loop erased
random walks, is a \emph{heap of cycles}. This was initially studied
in \cite{Viennot} as an application of Viennot's more general theory
of heaps of pieces. We will briefly recall the definitions and results
that we need; an enjoyable introduction to the theory can be found
in~\cite{Krattenthaler}.
\begin{defn}
  A \emph{labeled heap of cycles} on $\mathcal G$ is a partially
  ordered set $(\cc X,\preceq)$, each of whose elements
  $\alpha \in \cc X$
  is assigned a cycle $C_\alpha$, such that for every $\alpha,\beta
  \in \cc X$:
  \begin{enumerate}
  \item if $C_\alpha$ intersects $C_\beta$ then $\alpha$ and $\beta$
    are comparable; and
  \item if $\alpha\preceq\beta$ and $\beta$ covers $\alpha$
    ($\alpha\preceq \gamma\preceq \beta \Rightarrow$
    $\gamma\in \{\alpha,\beta\}$) then $C_\alpha$ intersects
    $C_\beta$.
  \end{enumerate}
  Two labeled heaps of cycles are isomorphic if they are isomorphic as
  ordered sets, with an isomorphism which respects the cycle
  assignment.  A \emph{heap of cycles} is an equivalence class (under
  isomorphism) of labeled heaps of cycles.
\end{defn}
In the sequel we will be slightly informal and will refer to the
cycles in a heap of cycles, as opposed to speaking of the labels of the
elements of a labelled heap of cycles. We will be interested in
weighted heaps of cycles. To this end, let $\{q_e\}_{e\in\mathcal{E}}$
be a set of formal commuting variables. Define, for each subgraph
$G=(V(G),E(G))$ of $\mathcal{G}$, a weight
$q(G)=\prod_{e\in E(G)} q_e$. For a heap of cycles $L$ we define
$q(L)=\prod_{C\in L}q(C)$.

We now come to the connection between heaps of cycles and loop-erased
walk. Consider a walk $\omega$ from $x$ to $z$. By performing the
loop-erasure process, we obtain a self-avoiding path
$\gamma=\LE(\omega)$, and a set of erased cycles. In order to
reconstruct $\omega$ from $\gamma$ and this set of cycles, however, we
need some extra information on the order in which the cycles were
erased. The essence of~\cite[Proposition 6.3]{Viennot}, due to
Viennot, is that this additional information is, in fact, the heap of
cycles structure. Formally, for $V\subset \cc V$, let $\mathcal{H}_V$
denote the set of heaps of cycles whose maximal elements intersect
$V$. 
\begin{theorem}
  \label{thm:viennot}
  Fix a self-avoiding walk $\gamma$. There is a weight-preserving
  bijection between $\mathcal{H}_\gamma$ and the set of paths whose
  loop erasure is $\gamma$, weighted by their loops. In particular,
  \begin{equation}\label{eq:viennot}
    \sum_{\omega\colon \LE(\omega)=\gamma}q(\omega) =
    q(\gamma)\sum_{L\in\mathcal{H}_\gamma}q(L). 
  \end{equation}
\end{theorem}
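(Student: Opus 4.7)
The plan is to establish a weight-preserving bijection $\Phi$ between walks $\omega$ with $\LE(\omega)=\gamma$ and heaps $L\in\mathcal{H}_\gamma$. Because $\Phi$ will send $\omega$ to a heap whose cycles are exactly those erased in producing $\LE(\omega)$, the weight identity $q(\omega)=q(\gamma)\,q(\Phi(\omega))$ is built into the construction, and the theorem reduces to describing $\Phi$ and verifying that it is a bijection.

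For the forward map, I would iterate the operator $L$ on $\omega$ to obtain the chronological list of erased cycles $C_1,C_2,\ldots,C_k$. Define a partial order on $\{C_1,\ldots,C_k\}$ as the reflexive transitive closure of the relation $C_j\preceq C_i$ whenever $i\le j$ and $V(C_i)\cap V(C_j)\neq\emptyset$, so that the first-erased cycle sits at the top. Axiom (i) of a heap is immediate, since any two intersecting cycles have comparable indices. Axiom (ii) holds because the presence of any intermediate cycle in the chain witnessing $C_j\prec C_i$ would strictly separate them in $\preceq$, contradicting $C_i$ covering $C_j$; hence covering forces direct intersection. A cycle $C_i$ is maximal precisely when no earlier-erased cycle shares a vertex with it, and in that case the repeated vertex producing $C_i$ during the algorithm must already lie in the self-avoiding prefix at that step, which is a subwalk of $\gamma$. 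Consequently the maximal cycles meet $\gamma$ and $\Phi(\omega)\in\mathcal{H}_\gamma$.

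For the inverse map, given $L\in\mathcal{H}_\gamma$, I would pick any linear extension of its partial order in which maximal elements come first, and reconstruct $\omega$ by starting from $\gamma$ and inserting the cycles from top to bottom, each at a vertex it shares with the walk already built. The heap axioms guarantee that such a vertex exists at every step (the cycle below sits on something it intersects), and two linear extensions of the same heap differ by commutations of non-intersecting cycles, which produce the same walk; consequently $\Phi^{-1}(L)$ is well-defined independently of the chosen linear extension.

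The main obstacle is to verify that $\Phi$ and $\Phi^{-1}$ are mutual inverses, which reduces to the statement that the chronological order in which the loop-erasure algorithm removes cycles from a reconstructed walk agrees with some valid linear extension of the input heap. The crux is that the first repetition encountered in a reconstructed walk occurs at the insertion point of a cycle that is maximal in the remaining heap, so that erasing this cycle and proceeding inductively matches the linear extension used for the reconstruction. Once this inductive step is in place, both compositions are the identity, and summing the weight identity over $\omega$ yields~\eqref{eq:viennot}. This structural correspondence is the content of~\cite[Proposition~6.3]{Viennot}.
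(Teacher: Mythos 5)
The paper does not actually prove Theorem~\ref{thm:viennot}: the statement is quoted from Viennot, and the authors explicitly defer to \cite{Marchal} and \cite[Appendix~A]{HelmuthLWW} for proofs, so there is no in-paper argument to compare against. Your outline follows the standard route (the chronological erasure order induces the heap structure; reconstruction proceeds by pushing cycles back onto $\gamma$), but it contains a genuine error: the partial order is upside down. With your convention ($C_j\preceq C_i$ whenever $i\le j$ and the cycles intersect, so that the first-erased cycle sits at the top), the first-erased cycle $C_1$ is \emph{always} maximal, yet it need not meet $\gamma$. Take $\omega=(a,x,y,x,a,c)$, so that $\gamma=\LE(\omega)=(a,c)$: the erasure order is $C_1=(x,y,x)$ followed by $C_2=(a,x,a)$, and $C_1$ is disjoint from $\{a,c\}$, so your $\Phi(\omega)$ is not in $\mathcal{H}_\gamma$ as the paper defines it (all maximal elements must intersect $\gamma$). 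The correct heap for this walk has $C_2$ on top of $C_1$: in general the first-erased cycle sits at the bottom, and each later-erased cycle is stacked above the earlier ones it intersects. The supporting claim you give --- that the repeated vertex closing a maximal $C_i$ lies in ``the self-avoiding prefix at that step, which is a subwalk of $\gamma$'' --- is also false: that prefix may contain vertices erased later (in the example the prefix is $(a,x,y)$ and the repeated vertex is $x\notin\gamma$). The correct argument, which only works with the order reversed, is that the base vertex of a cycle met by no \emph{later}-erased cycle can never subsequently be deleted and therefore survives into $\gamma$.

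Two further points are glossed over. The reconstruction map is underspecified: when a cycle shares several vertices (or several visits to one vertex) with the walk built so far, you must specify where it is inserted and how the directed cycle is rooted, and well-definedness under change of linear extension hinges on that choice. And the step you yourself identify as the crux --- that the two maps are mutually inverse --- is deferred to \cite[Proposition~6.3]{Viennot}, which is precisely the statement to be proved; as written the proposal is a plan rather than a proof. Reversing the order and then carrying out the induction you sketch (the first self-intersection of the reconstructed walk closes a cycle that is maximal in the remaining heap) would repair it.
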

This theorem was stated without proof
  as~\cite[Proposition~6.3]{Viennot}; we remark that proofs can be
  found in~\cite{Marchal} or~\cite[Appendix A]{HelmuthLWW}.

Recall that $\mathcal L$ denotes the set of all collection of
non-intersecting cycles. Equivalently, $\mathcal L$ is the set of
heaps of cycles with empty order; sometimes these are called
\emph{trivial} heaps.  For any set $V\subset \cc V$ of vertices let
$\mathcal{L}_V$ denote that set of trivial heaps of cycles in
$\mathcal L$ that do not contain any cycle intersecting $V$. We will
also make use of the following general identity, which is the basic
connection between the graph-theoretic objects of the previous section
and walks.
\begin{prop}[Proposition 5.3 of \cite{Viennot}] \label{prop:heapsofpieces}
For every $V\subset \mathcal{V}$, as formal series in $\{q_{e}\}$,
\begin{equation}
  \frac{\sum_{L\in \mathcal{L}_V} (-1)^{|L|} q(L)}{\sum_{L \in \mathcal{L}} (-1)^{|L|} q(L)} = \sum_{L \in \mathcal{H}_V} q(L).
\end{equation} 
\end{prop}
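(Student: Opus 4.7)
The plan is to clear denominators and establish the equivalent identity
\begin{equation*}
    \sum_{L' \in \mathcal{L}_V} (-1)^{|L'|} q(L') = \Bigl(\sum_{M \in \mathcal{L}} (-1)^{|M|} q(M)\Bigr) \Bigl(\sum_{H \in \mathcal{H}_V} q(H)\Bigr),
\end{equation*}
viewed as an identity of formal power series in the variables $\{q_e\}_{e\in\mathcal{E}}$. Expanding the right-hand side, I read it as a signed weighted sum over ordered pairs $(M, H)$ with $M$ a trivial heap and $H \in \mathcal{H}_V$, each pair carrying sign $(-1)^{|M|}$ and monomial weight $q(M)q(H)$. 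The aim is to exhibit a sign-reversing, weight-preserving involution on these pairs whose fixed points are precisely $\{(L', \emptyset) : L' \in \mathcal{L}_V\}$; the contribution of these fixed points then matches the left-hand side term by term.

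To set up the involution, I would fix an arbitrary linear order $\prec$ on the cycles of $\mathcal{G}$. Given $(M, H)$, I consider the superposition $N := M \cdot H$ obtained by stacking $M$ on top of $H$. A key observation is that $(M,H)$ is encoded by $N$ together with the choice of which maximal pieces of $N$ are assigned to $M$, and that the constraint $H \in \mathcal{H}_V$ restricts these choices. The involution then scans, in the order $\prec$, for a canonical swappable cycle $c$, namely a maximal piece of $N$ that intersects $V$ and whose reassignment between $M$ and the top layer of $H$ preserves the constraint $H \in \mathcal{H}_V$; it then exchanges the membership of $c$. This toggles $|M|$ by one while leaving $q(M)q(H) = q(N)$ unchanged, delivering the required sign reversal with the weight intact.

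The verification of the fixed-point set proceeds by a case analysis: if $H$ is non-empty, some maximal piece of $H$ intersects $V$ and should be eligible to be pushed up into $M$; and if $M$ contains a piece intersecting $V$, it should be eligible to be pushed down into $H$. The main technical obstacle is formulating the swap rule so that it is genuinely involutive: moving a piece down from $H$ into $M$ can expose new maximal pieces in the residual heap that might fail to intersect $V$, potentially violating $H' \in \mathcal{H}_V$. I would handle this either by restricting the swap to pieces that are isolated in $N$, or, following Viennot's original argument, by a more refined minimality rule that ensures the swap is reversible. Once a valid involution is specified, the remaining checks, i.e.\ the sign-reversing and weight-preserving properties together with the identification of the fixed points with $\mathcal{L}_V \times \{\emptyset\}$, reduce to routine combinatorial verifications on heaps.
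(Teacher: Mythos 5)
The paper itself does not prove this proposition (it is quoted from \cite{Viennot} with the remark that the proof follows from the methods there), so I am assessing your argument on its own terms. Your reduction to the cleared-denominator identity is correct, but the involution you sketch cannot be repaired within the framework you set up, and the obstacle is more serious than the reversibility issue you flag at the end. Your scheme fixes the superposition $N$ (your $M$ stacked on top of $H$) and only reassigns maximal pieces of $N$ between $M$ and the top layer of $H$; any such rule preserves $N$. However, the cancellations forced by the identity necessarily occur between pairs with \emph{different} superpositions. Concretely, take two cycles $C_1,C_2$ sharing a vertex, with $C_1$ meeting $V$ and $C_2$ disjoint from $V$. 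The only pairs $(M,H)$ whose cycles are exactly $\{C_1,C_2\}$ are $(\emptyset,H)$ with $H$ the two-element heap in which $C_2\preceq C_1$ (sign $+1$), and $(\{C_2\},\{C_1\})$ (sign $-1$); since no trivial heap in $\mathcal{L}_V$ uses these cycles, these two contributions must cancel against each other. Their superpositions are the two \emph{distinct} heaps on $\{C_1,C_2\}$ ($C_1$ on top versus $C_2$ on top), so no $N$-preserving involution can match them. Worse, both are fixed points of your rule: in the first, pushing $C_1$ up into $M$ would leave $H=\{C_2\}\notin\mathcal{H}_V$, and in the second the unique maximal piece $C_2$ of $N$ does not meet $V$. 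Hence your fixed-point set is strictly larger than $\mathcal{L}_V\times\{\emptyset\}$ and the argument collapses. The structural reason is the one you half-identify: deleting a maximal piece of $H$ can expose new maximal pieces violating the defining condition of $\mathcal{H}_V$, and no local rule on maximal pieces of a fixed $N$ avoids this.

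A route that does work, and is presumably what Viennot intends: first prove the unrestricted inversion $\bigl(\sum_{L\in\mathcal{L}}(-1)^{|L|}q(L)\bigr)\bigl(\sum_{H}q(H)\bigr)=1$, the second sum being over \emph{all} heaps, via the involution that toggles the $\prec$-smallest \emph{minimal} piece of the superposition $T\odot H$ between the trivial heap $T$ and the bottom of $H$. Here the superposition genuinely is preserved, there is no side condition on $H$ to maintain, and the unique fixed point is $(\emptyset,\emptyset)$. Apply this once to the full set of cycles and once to the cycles disjoint from $V$ (whose trivial heaps are exactly $\mathcal{L}_V$), and combine with the unique factorization of an arbitrary heap $K$ as $K=H''\odot H'$, where $H''$ is the down-closure in $K$ of the pieces meeting $V$ (so $H''\in\mathcal{H}_V$) and $H'$ is the complementary up-set, all of whose pieces are disjoint from $V$. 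This gives $\sum_{H\in\mathcal{H}_V}q(H)=\bigl(\sum_{H}q(H)\bigr)/\bigl(\sum_{H'}q(H')\bigr)$ with the denominator ranging over heaps of cycles disjoint from $V$, which is the proposition.
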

We remark that while the preceding proposition is stated without
proof~\cite[Proposition~5.3]{Viennot}, the proof follows from the
methods therein.

\subsection{Computing $U(a,b,c)$, II}
\label{sec:loop-erased-walk}

Let $\sum_{\omega\colon a\to c}$ denote the sum over all walks
  beginning at $a$ and ending at $c$. Recall from~\eqref{eq:obs} that
$C_{bc}=m_{c}^{2}r_{b}^{2}$. 
\begin{prop}
\label{cor:gamma}
For any three distinct vertices $a,b,c\in\cc V$,
\begin{equation}
\label{eq:gamma}
\frac{C_{bc}\Gamma(a,b,c)}{Z} =
\sum_{\omega\colon a \to c} 1_{b \in \LE(\omega)} \P_a[(X_n)_{n<h_\blacktriangle}=\omega].
\end{equation}
\end{prop}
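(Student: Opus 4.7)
The plan is to substitute the graphical formulas of Corollary~\ref{cor:Z} and Proposition~\ref{cor:num} into the left-hand side of~\eqref{eq:gamma}, and then pass from graphical sums to random walks via Proposition~\ref{prop:heapsofpieces} and Theorem~\ref{thm:viennot}.

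First I would carry out the substitution using $C_{bc} = m_c^2 r_b^2$. The factor $\prod_s r_s^3$ cancels between the prefactor in~\eqref{eq:num1} and the one in~\eqref{eq:Z-cycle}, the $r_b^2$ in $C_{bc}$ cancels the $r_b^{-2}$ from~\eqref{eq:num1}, and the remaining $m_c^2 r_c^{-1}$ will be recognized as the probability of jumping from $c$ to $\blacktriangle$. This reduces $C_{bc}\Gamma(a,b,c)/Z$ to
$$\frac{m_c^2}{r_c}\sum_{\substack{\gamma\in\saw(a,c)\\ b\in\gamma}} w(\gamma)\,\frac{\sum_{L\in\cc L_\gamma}(-1)^{|L|}\prod_{C\in L}w(C)}{\sum_{L\in\cc L}(-1)^{|L|}\prod_{C\in L}w(C)}.$$

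Next I would apply Proposition~\ref{prop:heapsofpieces} with $V = V(\gamma)$ and formal variables $q_{(x,y)} = \beta_{xy}/r_x$ associated to directed edges; with this choice $w(C) = q(C)$ and, by definition, $\cc L_\gamma$ coincides with $\cc L_{V(\gamma)}$. The ratio of alternating cycle sums then equals $\sum_{L\in\cc H_{V(\gamma)}} w(L)$. Theorem~\ref{thm:viennot} now converts the pair (self-avoiding walk $\gamma$, heap of cycles on $\gamma$) into a walk whose loop-erasure is $\gamma$:
$$w(\gamma)\sum_{L\in\cc H_{V(\gamma)}}w(L) \;=\; \sum_{\omega\colon \LE(\omega)=\gamma}w(\omega).$$
For any walk $\omega$ from $a$ to $c$, the definition~\eqref{eq:RW} of the Markov chain directly gives $w(\omega)\cdot m_c^2/r_c = \P_a[(X_n)_{n<h_\blacktriangle}=\omega]$. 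Summing over $\gamma\in\saw(a,c)$ with $b\in\gamma$ and then over $\omega$ with $\LE(\omega)=\gamma$ is the same as summing over all walks $\omega$ from $a$ to $c$ with $b\in\LE(\omega)$, yielding the right-hand side of~\eqref{eq:gamma}.

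I expect the main obstacle to be the bookkeeping of the $r_x$ prefactors, and verifying that the $\cc L_\gamma$ appearing in Proposition~\ref{cor:num} is indeed the set $\cc L_{V(\gamma)}$ of Proposition~\ref{prop:heapsofpieces}, and that the Viennot weight $q_e$ matches $\beta_{xy}/r_x$ on directed edges so that $w(\cdot)$ and $q(\cdot)$ coincide on both cycles and walks. Once these identifications are in place, the proposition follows immediately from combining Proposition~\ref{prop:heapsofpieces} and Theorem~\ref{thm:viennot} with the probabilistic interpretation of $w(\omega)\cdot m_c^2/r_c$.
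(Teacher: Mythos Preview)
Your proposal is correct and follows essentially the same route as the paper: substitute the formulas from Corollary~\ref{cor:Z} and Proposition~\ref{cor:num}, set $q_{xy}=\beta_{xy}r_x^{-1}$, apply Proposition~\ref{prop:heapsofpieces} to convert the ratio of trivial-heap sums to a sum over $\mathcal H_\gamma$, then use Theorem~\ref{thm:viennot} and recognize $m_c^2/r_c$ as the jump probability to $\blacktriangle$. The only cosmetic difference is that the paper first computes $\Gamma/Z$ and multiplies by $C_{bc}$ at the end, whereas you cancel the $r_b^{2}$ factor at the outset.
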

\begin{proof}
By Proposition~\ref{cor:num} and Corollary~\ref{cor:Z}, 
setting $q_{xy} = \beta_{xy} r_x^{-1}$,
\begin{align}
\frac{\Gamma(a,b,c)}{Z} &= r_b^{-2}r_c^{-1} \sum_{\gamma \in
  \saw(a,c)} q(\gamma)1_{b\in\gamma} \frac{\sum_{L\in
    \mathcal{L}_\gamma} (-1)^{|L|} q(L)}{\sum_{L\in\mathcal{L}}
  (-1)^{|L|} q(L)} \\
&= 
r_b^{-2}r_c^{-1} \sum_{\gamma \in \saw(a,c)}q(\gamma)1_{b\in\gamma}
  \sum_{L\in \mathcal{H}_\gamma} q(L);
\end{align}
using Proposition~\ref{prop:heapsofpieces} for the last equality. 
We can now apply Theorem \ref{thm:viennot}, obtaining
\begin{equation}
\frac{\Gamma(a,b,c)}{Z} = r_b^{-2}r_c^{-1} \sum_{\omega\colon a \to c}
1_{b\in \LE(\omega)} q(\omega)
\end{equation}
as formal power series. The result holds in the sense of convergent
power series when interpreting $q_{xy}$ numerically as
$\beta_{xy}r_{x}^{-1}$, as after multiplying by $C_{bc}$ we recognize
the right-hand side to be the right-hand side of~\eqref{eq:gamma}
since $m_{c}^{2}r_{c}^{-1}$ is the probability of a jump from $c$ to
$\blacktriangle$.
\end{proof}

Next we show that the term $\Theta$ is negligible compared to $\Gamma$
as $m_{c}^{2}\to \infty$.
\begin{prop}
  \label{prop:thetasmall}
For any three distinct vertices $a,b,c\in\cc V$
\begin{equation} \label{eq:theta}
\left|\frac{\Theta(a,b,c)}{Z}\right| \le \frac{\Gamma(a,b,c)}{Z} \cdot
\P_{a}[X_{h^{-}_{\blacktriangle}}=c] \cdot
\frac{\sum_{v\in \cc V}\beta_{cv}}{m_{c}^{2}}
\max_{v\in\mathcal{V}}{\E_{v} [h^{-}_{\blacktriangle}]}.
\end{equation}
\end{prop}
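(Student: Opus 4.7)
The plan is to pass from $\Theta(a,b,c)$ to a walk expectation via a heap bijection and then bound the expectation using the strong Markov property. Following the bookkeeping of $r$-factors in the proof of Proposition~\ref{cor:num}, the extra color-$3$ cycle $C$ present in the sum defining $\Theta$ contributes the weight $w(C)$ and, as in the proofs above, dividing by $Z$ and applying Propositions~\ref{prop:heapsofpieces} gives
\[
\frac{\Theta(a,b,c)}{Z} = \frac{1}{r_c r_b^2}\sum_{\gamma\in\saw(a,c)}\mathbf{1}[b\in\gamma]\, w(\gamma)\sum_{C}w(C)\sum_{H\in\mathcal H_{\gamma\cup C}}w(H),
\]
where $C$ ranges over directed cycles passing through both $a$ and $c$.

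The key combinatorial step is to identify the double sum $\sum_{C}w(C)\sum_{H\in\mathcal H_{\gamma\cup C}}w(H)$ with a simpler single sum by adjoining $C$ to $H$. Given such a pair $(C,H)$, set $H'=H\cup\{C\}$ by placing $C$ above every element of $H$ that it intersects; then $C$ is a maximal element of $H'$, and any other maximal element of $H'$ is a maximal element of $H$ disjoint from $C$, hence touches $\gamma$, so $H'\in\mathcal H_\gamma$. Conversely, removing any maximal cycle $C'$ of $H'\in\mathcal H_\gamma$ passing through both $a$ and $c$ yields an element of $\mathcal H_{\gamma\cup C'}$. This bijection gives $\sum_{C}w(C)\sum_{H\in\mathcal H_{\gamma\cup C}}w(H)=\sum_{H'\in\mathcal H_\gamma}N_{a,c}(H')w(H')$, where $N_{a,c}(H')$ denotes the number of maximal elements of $H'$ through both $a$ and $c$. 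The crucial observation is that any two cycles through $c$ intersect at $c$ and are therefore comparable in the heap order, so $H'$ admits at most one maximal element through $c$, and in particular $N_{a,c}(H')\in\{0,1\}$.

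Combining the preceding identities with Theorem~\ref{thm:viennot} and multiplying by $C_{bc}=m_c^2 r_b^2$ as in the proof of Proposition~\ref{cor:gamma} yields
\[
C_{bc}\frac{\Theta(a,b,c)}{Z} = \E_a\bigl[\mathbf{1}_{b\in\LE,\, X_{h^-_\blacktriangle}=c}\cdot N_{a,c}(H((X_n)_{n<h_\blacktriangle}))\bigr].
\]
A maximal cycle of $H(\omega)$ through $a$ and $c$ is in particular an erased cycle containing both vertices, and by the description of loop-erasure such a cycle only arises when the walk, having previously visited $c$, returns to $a$ at a moment when $c$ still lies in the current loop-erasure. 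Thus $\{N_{a,c}\ge 1\}$ is contained in the event $\{\sigma<h_\blacktriangle\}$, where $\sigma=\inf\{t\ge 1:X_t=a\text{ and }\exists s<t\text{ with }X_s=c\}$. Because truncation at $\sigma$ resets the loop-erasure to $(a)$, the loop-erasure of the full walk agrees with that of the post-$\sigma$ walk, so the strong Markov property at $\sigma$ gives
\[
\P_a[\sigma<h_\blacktriangle,\, X_{h^-_\blacktriangle}=c,\, b\in\LE]=\P_a[\sigma<h_\blacktriangle]\cdot\P_a[X_{h^-_\blacktriangle}=c,\, b\in\LE],
\]
with the right-hand factor equal to $C_{bc}\Gamma(a,b,c)/Z$ by Proposition~\ref{cor:gamma}.

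It remains to bound $\P_a[\sigma<h_\blacktriangle]$. Strong Markov at $\tau_c$ factors it as $\P_a[\tau_c<h_\blacktriangle]\cdot\P_c[\tau_a<h_\blacktriangle]$. For the first factor, the identity $\P_a[X_{h^-_\blacktriangle}=c]=\P_a[\tau_c<h_\blacktriangle]\cdot\P_c[X_{h^-_\blacktriangle}=c]$ together with the trivial lower bound $\P_c[X_{h^-_\blacktriangle}=c]\ge m_c^2/r_c$ (killing at $c$ at the very first step) gives $\P_a[\tau_c<h_\blacktriangle]\le \P_a[X_{h^-_\blacktriangle}=c]\cdot r_c/m_c^2$. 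For the second factor, conditioning on the first step out of $c$ and using $\P_v[\tau_a<h_\blacktriangle]\le \E_v[h^-_\blacktriangle]\le\max_u\E_u[h^-_\blacktriangle]$, valid because $\mathbf{1}_{\tau_a<h_\blacktriangle}\le \tau_a\le h^-_\blacktriangle$ on $\{\tau_a<h_\blacktriangle\}$ whenever $\tau_a\ge 1$, produces $\P_c[\tau_a<h_\blacktriangle]\le (\sum_v \beta_{cv}/r_c)\max_u\E_u[h^-_\blacktriangle]$. Multiplying the two bounds gives the claimed factor $\P_a[X_{h^-_\blacktriangle}=c]\cdot(\sum_v\beta_{cv}/m_c^2)\cdot\max_u\E_u[h^-_\blacktriangle]$, which upon dividing through by $C_{bc}$ yields the required estimate. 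The main difficulty is the heap bijection and the observation $N_{a,c}\le 1$; the probabilistic estimates afterwards are routine applications of the strong Markov property.
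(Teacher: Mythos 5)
Your argument is correct in substance and reaches the paper's bound, but the route is genuinely different after the common starting point. Both proofs begin from the same heap identity for $\Theta(a,b,c)/Z$ (the paper's version splits your cycle $C$ into the two arcs $a\to c$ and $c\to a$). From there the paper uses the \emph{superposition} operation lossily: surjectivity of $(L_1,L_2,L_3)\mapsto L_1\odot L_2\odot L_3$ gives $\sum_{L\in\mathcal H_V}q(L)\le\prod_i\sum_{L_i\in\mathcal H_{\gamma_i}}q(L_i)$, the mutual-avoidance constraint on the $\gamma_i$ is dropped, and each of the three resulting factors is a plain walk count that Theorem~\ref{thm:viennot} converts into $\Gamma/Z$, a hitting probability, and a Green's-function/expected-lifetime bound. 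You instead keep $C$ whole, adjoin it to the heap as a maximal element, and exploit the observation that two cycles through $c$ are comparable, so at most one maximal element passes through $c$; this turns the double sum into a single walk expectation weighted by an indicator, which you then factor with the strong Markov property at $\sigma$ and at $\tau_c$. Your combinatorial step is sharper (an injection with multiplicity at most one, rather than a surjectivity bound followed by discarding disjointness), but it buys this at the cost of needing the \emph{explicit} form of the bijection in Theorem~\ref{thm:viennot}: your containment $\{N_{a,c}\ge 1\}\subseteq\{\sigma<h_\blacktriangle\}$ requires knowing that the heap cycles are the chronologically erased loops and that any erased loop containing the starting vertex $a$ is necessarily rooted at $a$ (so that a loop through both $a$ and $c$ forces a visit to $c$ followed by a return to $a$). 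That fact is true for the standard construction, but Theorem~\ref{thm:viennot} is only quoted abstractly, so a complete write-up would have to unpack it; the paper's route uses the heap machinery purely as a black box applied to each arc separately.

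Two small repairs. First, your displayed ``bijection identity'' is really only an inequality $\le$: the cycles $C$ occurring in $\Theta$ must avoid $V(\gamma)\setminus\{a,c\}$, whereas a maximal element of $H'\in\mathcal H_\gamma$ through $a$ and $c$ need not, so the injection is not onto $\{H':N_{a,c}(H')\ge 1\}$. This is harmless since you only need the upper bound. Second, the estimate $\P_v[\tau_a<h_\blacktriangle]\le\E_v[h^-_\blacktriangle]$ fails for $v=a$ (where $\tau_a=0$), which occurs when $\beta_{ca}>0$; you should either bound by $\E_v[h_\blacktriangle]$ or treat the first step to $a$ separately. (The paper's own treatment of the walks from $c$ to $a$ has an analogous off-by-one, and since the proposition is only used in the limit $m_c^2\to\infty$ neither affects anything downstream.)
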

\begin{proof}
  In each term of $\Theta$ we identify three coloured self-avoiding
  walks $\gamma_{i}$. The first goes from $a$ to $c$ passing through
  $b$, with colour 1 from $a$ to $b$ and colour 2 from $b$ to $c$. The
  second goes from $a$ to $c$ and is of colour 3, and the third goes
  from $c$ to $a$ and is of colour 3. Moreover, the $\gamma_{i}$
  intersect only at $a$ and $c$. Let $V(\gamma_{i})$ denote the
  vertices in $\gamma_{i}$, and let
  $V=V(\gamma_1)\cup V(\gamma_2) \cup V(\gamma_3)$. Then, summing over
  the colouring of the other edges as in the proof of Proposition~\ref{cor:num},
\begin{equation}
\Theta(a,b,c) = \frac{\prod_{x\in \cc V} r_x^3}{r_{c}r_{b}^{2}}
\mathop{\sum\nolimits^{'}}_{\gamma_1,\gamma_2,\gamma_3} \sum_{L \in
  \mathcal{L}_{V}} q(\gamma_1) q(\gamma_2) q(\gamma_3) (-1)^{|L|}
q(L), 
\end{equation} 
with $q_{xy}=\beta_{xy} r_y^{-1}$. The notation
$\sum'_{\gamma_{1},\gamma_{2},\gamma_{3}}$ means that the
self-avoiding walks $\gamma_{i}$ intersect only at $a$ and $c$. The
$(-1)^{|L|}$ factor comes from the fact that each loop in $L$ is
counted with weight $-1$ twice (for colouring 1 and 2) and once with
weight +1 (for colouring 3).

By Proposition~\ref{prop:heapsofpieces} and Corollary~\ref{cor:Z}, as in the proof of Proposition~\ref{cor:gamma},
\begin{equation}
\label{eq:thetaheap}
\frac{\Theta(a,b,c)}{Z} = \frac{1}{r_{c}r_{b}^{2}}\mathop{\sum\nolimits^{'}}_{\gamma_1, \gamma_2, \gamma_3} q(\gamma_1)q(\gamma_2)q(\gamma_3) \sum_{L\in \mathcal{H}_V}q(L).
\end{equation}

Controlling this term requires a small digression.  Let $L_1$ and
$L_2$ be two heaps of cycles. The \emph{superposition $L_1 \odot L_2$
  of $L_2$ on $L_1$} is the heap of cycles that results from putting
$L_{2}$ on top of $L_{1}$. More formally, the elements of
$L_{1}\odot L_{2}$ are the disjoint union of the elements of $L_1$ and
those of $L_2$, the cycle assignment remains the same, and the order
is the transitive closure of the following order: fix elements
$\alpha_{1}$ and $\alpha_{2}$ such that $C_{\alpha_{1}}$ intersects
$C_{\alpha_{2}}$. If both elements belong to $L_1$ (respectively, $L_2$) they
keep the order they had in $L_1$ (respectively, $L_2$). If, on the
other hand, $\alpha_{1}\in L_1$ and $\alpha_{2} \in L_2$, then
$\alpha_{1}<\alpha_{2}$ (and vice versa). Superposition is an
associative binary operation, and $q$ is a homomorphism for this
algebraic structure, i.e., $q(L_1 \odot L_2) = q(L_1) q(L_2)$,
see~\cite{Viennot}.

For any two sets $V_1,V_2$, the superposition $L=L_1 \odot L_2$ of a
heap of cycles $L_2 \in \mathcal{H}_{V_1}$ on a heap of cycles
$L_1 \in \mathcal{H}_{V_2}$ is in $\mathcal{H}_V$ for
$V=V_1 \cup V_2$. Moreover, this map is surjective. That is, any heap
of cycles $L\in \mathcal{H}_V$ can be expressed as $L=L_1 \odot L_2$
for some $L_1 \in \mathcal{H}_{V_1}$ and $L_2 \in
\mathcal{H}_{V_2}$. To see this, given a heap in $\mathcal{H}_{V}$,
define $L_1$ to consist of the cycles in $L$ that intersect $V_{1}$ or
are smaller than a cycle intersecting $V_1$, let $L_2$ will consist of
the other elements of $L$. It is then straightforward to verify that
$L_1 \in \mathcal{H}_{V_1}$, $L_2 \in \mathcal{H}_{V_2}$, and
$L=L_1 \odot L_2$.

We now return to estimating $\frac{\Theta(a,b,c)}{Z}$. We continue to
write $q_{xy}$ in place of $\beta_{xy}r_{x}^{-1}$ for brevity, but to
make estimates we are considering series as analytical objects. Since
the weights are positive, \eqref{eq:thetaheap} and the surjectivity of
superposition implies
\begin{equation}
\left|\frac{\Theta(a,b,c)}{Z} \right| \le \frac{1}{r_{c}r_{b}^{2}}\mathop{\sum\nolimits^{'}}_{\gamma_1, \gamma_2, \gamma_3} q(\gamma_1)q(\gamma_2)q(\gamma_3)
\sum_{L_1\in \mathcal{H}_{\gamma_1}} \sum_{L_2\in \mathcal{H}_{\gamma_2}} \sum_{L_3\in \mathcal{H}_{\gamma_3}}q(L_1)q(L_2)q(L_3),
\end{equation}
where the right-hand side may be infinite.

We now bound this expression by replacing $\sum^{'}$ by $\sum$, i.e.,
by relaxing the constraint that the $\gamma_{i}$ are mutually
self-avoiding. This results in a product of three terms -- a
self-avoiding walk $\gamma_1$ from $a$ to $c$ passing through $b$, and
two self-avoiding walks $\gamma_2$ and $\gamma_3$, one from $a$ to $c$
and the other from $c$ to $a$. The first term is, by following the
proof of Corollary~\ref{cor:gamma},
\begin{equation}
  \label{eq:Theta1}
  \frac{1}{r_{c}r_{b}^{2}}\sum_{\gamma_1}q(\gamma_1)\sum_{L_1\in \mathcal{H}_{\gamma_1}}
q(L_1) = \frac{\Gamma(a,b,c)}{Z}.
\end{equation}

The second term,
$\sum_{\gamma_2}
\sum_{L_2\in\mathcal{H}_{\gamma_2}}q(\gamma_2)q(L_2)$, can be
analyzed in the same fashion. By Theorem \ref{thm:viennot},
\begin{equation}
\sum_{\gamma\colon a \to c} \sum_{L\in\mathcal{L}_{\gamma}}q(\gamma)q(L) =
\sum_{\omega\colon a \to c} q(\omega) = 
 \frac{m_c^2 + \sum_{v\in\cc V} \beta_{cv}}{m_c^2} ~ \P_a\cb{X_{h^{-}_{\blacktriangle}}=c}.
\end{equation} 

The third term,
$\sum_{\gamma_3}
\sum_{L_3\in\mathcal{H}_{\gamma_3}}q(\gamma_3)q(L_3)$, counts all
walks from $c$ to $a$ by Theorem~\ref{thm:viennot}. The first step, to
some $x$ such that $\beta_{cx}>0$, has weight
$\frac{\beta_{cx}}{r_c}$, so
\begin{equation}
\sum_{\gamma\colon c \to a}
\sum_{L\in\mathcal{H}_{\gamma}}q(\gamma)q(L) 
= \sum_{x} \frac{\beta_{cx}}{r_c} \sum_{\omega\colon x \to a} q(\omega) =
\sum_{x} \frac{\beta_{cx}}{r_c} \sum_{k\in \N} 
\sum_{\substack{\omega\colon x \to a \\ |\omega|=k}} \!\! \P_x[(X_n)_{n\le k}=\omega].
\end{equation}
The internal sum on the right-hand side is $\P_x[h_\blacktriangle > k]$, and
summing this over $k$ yields $\E_x[h_\blacktriangle^{-}]$. Optimizing the upper bound over $x$ gives
\begin{equation}
  \sum_{\gamma_3}
\sum_{L_3\in\mathcal{H}_{\gamma_3}}q(\gamma_3)q(L_3) \leq
\frac{\sum_{x\in\cc V} \beta_{cx}}{m_c^2 + \sum_{v\in\cc V} \beta_{cv}} ~ \max_{x\in\mathcal V}{\E_s[h_\blacktriangle]}.
\end{equation}
Taking the product of all three terms yields \eqref{eq:theta}.
\end{proof}

\subsection{Proof of Theorem~\ref{thm:main}}
\label{sec:mainthm}
\begin{proof}[Proof of Theorem~\ref{thm:main}]
  This follows from Proposition~\ref{prop:num} combined with
  Propositions~\ref{cor:gamma} and~\ref{prop:thetasmall}.
\end{proof}

\section{Proof of Theorem~\ref{thm:main-2}}
\label{sec:thm2}

In this section we prove Theorem~\ref{thm:main-2}; since the argument
are rather similar to those of the preceding section we will be a
little brief in some places.

\begin{lemma}\label{lem:expS'}
  The exponential of the action $S'$ from~\eqref{eq:action-2} is given by
  \begin{equation}\label{eq:expS'}
    e^{S'} = \prod_{x,i} \left(1+r_x u^{\sss (i)}_x v^{\sss (i)}_x \right) \,
    \prod_x \left( 1 + \sum_{i,y} \beta_{xy}v^{\sss (i)}_y u^{\sss (i)}_x \right).
  \end{equation}
\end{lemma}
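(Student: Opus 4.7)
The plan is to exploit the facts that (a) every summand in $S'$ is an even element of the Grassmann algebra, so exponentials factor over sums, and (b) each summand is nilpotent of low order, so the exponentials are elementary.

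First I would split
\begin{equation}
S' = \sum_{x,i} r_x u^{\sss (i)}_x v^{\sss (i)}_x + \sum_x \bigl(\chiraltau_x - \tfrac12 (\chiraltau_x)^2 + \tfrac13 (\chiraltau_x)^3\bigr),
\end{equation}
using $\sigma_x\cdot \sigma_x = 2\sum_i u^{\sss (i)}_x v^{\sss (i)}_x$, which follows directly from~\eqref{eq:IP-new}. All the terms in this sum are even elements, so they commute with one another and the exponential of the sum is the product of the exponentials.

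Next I would handle the mass part. Each $u^{\sss (i)}_x v^{\sss (i)}_x$ squares to zero: for $i=1,2$ this is because $u^{\sss (i)}_x=\eta^{\sss (i)}_x$ and $v^{\sss (i)}_x=\xi^{\sss (i)}_x$ are Grassmann, and for $i=3$ it follows from $\phi_x^2=\psi_x^2=0$. Hence $\exp(r_x u^{\sss (i)}_x v^{\sss (i)}_x)=1+r_x u^{\sss (i)}_x v^{\sss (i)}_x$, and using the commutativity of these even terms across $(x,i)$ gives the first product in~\eqref{eq:expS'}.

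The heart of the argument is to show that, for each fixed $x$,
\begin{equation}
\exp\bigl(\chiraltau_x - \tfrac12(\chiraltau_x)^2 + \tfrac13(\chiraltau_x)^3\bigr) = 1 + \chiraltau_x.
\end{equation}
Since $\chiraltau_x$ is even, the identity $\exp(\log(1+\chiraltau_x))=1+\chiraltau_x$ reduces this to verifying that $\chiraltau_x^4=0$, which truncates the logarithm to the three displayed terms. This is the main (and only) obstacle. To check it, I would observe that each summand in $\chiraltau_x = \sum_{i,y}\beta_{xy} v^{\sss (i)}_y u^{\sss (i)}_x$ carries a factor $u^{\sss (i)}_x$, and every such factor is nilpotent of order two: for $i=1,2$ this is immediate, while for $i=3$ one has $(\phi_x)^2=(\eta^{\sss (3)}_x \xi^{\sss (3)}_x)^2=0$. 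Since there are only three distinct values of $i$, any product of four such factors must repeat some $u^{\sss (i)}_x$ (all of which commute past one another because the $v^{\sss (i)}_y u^{\sss (i)}_x$ are even), and hence vanishes. Thus $\chiraltau_x^4=0$. Commutativity of $\chiraltau_x$ across $x$, together with commutativity with the mass part, lets me combine the two products and conclude~\eqref{eq:expS'}.
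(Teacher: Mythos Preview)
Your proof is correct and takes essentially the same approach as the paper: both arguments rest on the nilpotency $(\chiraltau_x)^4=0$ (equivalently, that any fourth-order term contains a repeated $u^{\sss (i)}_x$), which truncates the logarithmic series for $1+\chiraltau_x$ to three terms. The paper simply runs the computation in the opposite direction, taking the logarithm of the right-hand side of~\eqref{eq:expS'} and recognizing it as $S'$.
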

\begin{proof}
 Using $\log(1+x)=x-x^{2}/2+x^{3}/3+\dots$, the logarithm
  of~\eqref{eq:expS'} is
  \begin{equation}
    \sum_{x,i}r_{x}u_{x}^{\sss (i)}v_{x}^{\sss (i)} +
    \sum_{x}\sum_{y,i}\beta_{xy}v_{y}^{\sss (i)}u_{x}^{\sss (i)} -
    \frac{1}{2}\sum_{x}\ob{\sum_{y,i}\beta_{xy}v_{y}^{\sss  (i)}u_{x}^{\sss (i)}}^{2} 
    +\frac{1}{3}\sum_{x}\ob{\sum_{y,i}\beta_{xy}v_{y}^{\sss (i)}u_{x}^{\sss (i)}}^{3}, 
  \end{equation}
  since any higher order terms include a factor of
  $(u_{x}^{\sss (i)})^{2}=0$ for some $i$.  This establishes the
  result.
\end{proof}

\begin{prop}
  \label{prop:Z'}
  Let $Z'\bydef \int e^{S'}$. Then $Z=Z'$.
\end{prop}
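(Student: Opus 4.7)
The plan is to compute $Z'=\int e^{S'}$ graphically and match the resulting expression with the formula $Z = \left(\prod_s r_s^3\right)\sum_{L\in\cc L}(-1)^{|L|}\prod_{C\in L}w(C)$ supplied by Corollary~\ref{cor:Z}. Because of Lemma~\ref{lem:expS'}, $e^{S'}$ is already expressed as an explicit product of two types of factors: self-loop factors $(1+r_x u_x^{\sss (i)} v_x^{\sss (i)})$ and \emph{per-vertex} outgoing-edge factors $\bigl(1+\sum_{i,y}\beta_{xy}v_y^{\sss (i)}u_x^{\sss (i)}\bigr)$. Expanding the two products then, as in the proof of Proposition~\ref{prop:Z-graph}, interpreting each monomial $v_y^{\sss (i)}u_x^{\sss (i)}$ as a directed coloured edge $(x,y)$ and each $u_x^{\sss (i)}v_x^{\sss (i)}$ as a coloured self-loop at $x$, represents $e^{S'}$ as a signed sum over coloured directed subgraphs with self-loops.

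The key structural observation is that the second product is taken vertex by vertex, so in any contributing term \emph{each vertex has coloured out-degree at most one}. I would argue as follows. The Grassmann integral picks out the top-degree monomial, which forces, for every pair $(x,i)$, the factor $u_x^{\sss (i)}v_x^{\sss (i)}$ to appear exactly once. If vertex $x$ selects the edge $v_y^{\sss (i_x)}u_x^{\sss (i_x)}$ from its outgoing-edge factor, then $(u_x^{\sss (i_x)})^2=0$ rules out the corresponding self-loop, and the required $v_x^{\sss (i_x)}$ must be produced by some other vertex $z$ selecting an edge into $x$ of the \emph{same} colour $i_x$. Combined with the out-degree one constraint, this forces the non-self-loop edges to form a disjoint union of monochromatic directed cycles, with the three self-loops present at any vertex outside a cycle and the two orthogonal self-loops present at any vertex inside one.

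Once the combinatorial structure is pinned down, the weight of each configuration factorizes: vertices outside any cycle contribute $r_x^3$; vertices on a cycle contribute $r_x^2$ together with the edge weights $\beta_{xy}$. The sign analysis is identical to the one at the end of the proof of Proposition~\ref{prop:Z-graph}: reordering the Grassmann factors along a monochromatic cycle of colour $i\in\{1,2\}$ produces a $-1$ because $u^{\sss (i)}$ is odd, while colour $3$ produces $+1$ because $u^{\sss (3)}=\phi$ is even. Summing over the three possible colours for each cycle therefore yields a weight $-1-1+1=-1$ per cycle, giving
\begin{equation}
Z' = \Bigl(\prod_{x\in\cc V}r_x^{3}\Bigr)\sum_{L\in\cc L}(-1)^{|L|}\prod_{C\in L}w(C),
\end{equation}
which is exactly the right-hand side of Corollary~\ref{cor:Z}, so $Z=Z'$.

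The main technical obstacle is the careful bookkeeping of Grassmann signs and the verification that no extra combinatorial structures (e.g.\ vertices of higher out-degree, or ``bad'' cancellations) contribute, since unlike the symmetric action $S$ there are no $S_{4}$ or $S_{6}$ terms to worry about. Indeed, the advantage of $S'$ is precisely that Lemma~\ref{lem:expS'} already packages the exponential into a product whose expansion does not require the intricate $S_{4}/S_{6}$ cancellation argument used in Proposition~\ref{prop:Z-graph}; the degree counting alone constrains the diagrams to be unions of monochromatic directed cycles.
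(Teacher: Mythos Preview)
Your proposal is correct and follows essentially the same route as the paper: expand the product form of $e^{S'}$ from Lemma~\ref{lem:expS'}, use the per-vertex structure of the second product to force out-degree at most one, and deduce that only disjoint monochromatic directed cycles contribute, with the sign computed exactly as in Proposition~\ref{prop:Z-graph}. The only cosmetic difference is that the paper stops once it recognises the weight as $w_{0}(G)$ and invokes Proposition~\ref{prop:Z-graph} directly, whereas you carry out the colour sum and match against Corollary~\ref{cor:Z}; these are equivalent endpoints.
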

\begin{proof}
  We represent the terms in the expansion of the products
  in~\eqref{eq:expS'} as coloured subgraphs of $\cc G$ with self-loops
  by viewing each monomial $v^{\sss (i)}_{y} u^{\sss (i)}_{x}$ as a
  directed edge with colour $i$ from $x$ to $y$, and each monomial
  $u^{\sss (i)}_x v^{\sss (i)}_x$ as a self-loop of colour $i$. We
  recall that $\beta_{xx}=0$ for all
  $x$.

  By definition, $\int e^{S}$ is the coefficient of the top degree
  term of $e^{S}$. This coefficient is a sum over coloured directed
  graphs $G$ with self-loops such that the coloured in- and out-degree
  of every vertex is one for all colours.

  We first consider which graphs $G$ without self-loops that result
  from expanding the second product in~\eqref{eq:expS'} can make a non-zero
  contribution. We claim that a non-zero contribution can only arise
  if for each vertex $x$, $x$ has either in- and out-degree zero, or
  if $x$ has colour in- and out-degree one for exactly one colour. 
  
  To see this, note that a self-loop of colour $i$ at $x$ contributes
  in- and out-degree one of colour $i$ at $x$. Hence if $G$ had a
  vertex with unequal coloured in- and out-degrees, the term cannot be
  top-degree. Moreover, the form of the product in~\eqref{eq:expS'}
  ensures each vertex has out-degree at most one. This proves the
  claim, and it follows that the contributing $G$ are unions of
  pairwise disjoint monochromatic directed cycles.
  
  To conclude the proof, we note that the coefficient in front of a
  term represented by such a graph $G$ is equal $w_0(G)$, just as in
  Proposition \ref{prop:Z-graph}.
\end{proof}

We next consider the numerator of $U'(a,b,c)$ as defined in~\eqref{eq:U-2}. Let
\begin{equation}
  \label{eq:gamma'}
  \Gamma'(a,b,c) \bydef \sum_G w_0(G).
\end{equation}
where the sum defining $\Gamma'$ is a sum over coloured subgraphs $G$
which are unions of a coloured self-avoiding walk $\gamma$ and coloured
directed cycles subject to the following conditions, in which
$V(\gamma)$ denotes the vertices contained in $\gamma$:
\begin{enumerate}
\item $\gamma$ is a self avoiding walk from $a$ to $c$ that passes
  through $b$, of colour $1$ from $a$ to $b$, and of colour $2$ from
  $b$ to $c$,
\item the directed cycles are pairwise disjoint,
\item directed cycles of colour $1$ do not intersect $V(\gamma)\setminus \{c\}$, 
\item directed cycles of colour $2$ do not intersect $V(\gamma)$, 
\item directed cycles of colour $3$ do not intersect $V(\gamma)\setminus \{c\}$.
\end{enumerate}

\begin{prop}
  \label{prop:num'}
  For distinct $a,b,c \in \mathcal V$,
  \begin{equation}
    \label{eq:num'}
    \int u^{\sss (2)}_c v^{\sss (2)}_b u^{\sss (1)}_b v^{\sss (1)}_a e^{S'} = \Gamma'(a,b,c).
  \end{equation}
\end{prop}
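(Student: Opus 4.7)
The plan is to mirror the proof of Proposition~\ref{prop:Z'}, incorporating the additional constraints imposed by the prefactor $u^{\sss (2)}_{c} v^{\sss (2)}_{b} u^{\sss (1)}_{b} v^{\sss (1)}_{a}$ at the three distinguished vertices. Using Lemma~\ref{lem:expS'}, I expand $e^{S'}$ as a product and represent each monomial $v^{\sss (i)}_{y} u^{\sss (i)}_{x}$ as a directed edge of colour $i$ from $x$ to $y$, and each monomial $u^{\sss (i)}_{x} v^{\sss (i)}_{x}$ as a self-loop of colour $i$ at $x$. The essential feature inherited from $S'$ is that the second product in~\eqref{eq:expS'} is indexed over vertices (not over ordered pairs), so each vertex $x$ contributes at most one outgoing edge to any surviving configuration.

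For vertices $y \notin \{a,b,c\}$, the top-degree analysis is identical to that of Proposition~\ref{prop:Z'}: either $y$ carries three self-loops and no other edges, or $y$ has exactly one outgoing edge of some colour $i$ balanced by exactly one incoming edge of the same colour $i$, together with self-loops of the two remaining colours. I then analyse the distinguished vertices in turn. The prefactor supplies $\xi^{\sss (1)}_{a}$ at $a$, so $a$ must carry an outgoing edge of colour $1$ (a self-loop of colour $1$ would duplicate this generator), no other non-loop edges, and self-loops of colours $2$ and $3$. The prefactor supplies $\eta^{\sss (1)}_{b}$ and $\xi^{\sss (2)}_{b}$ at $b$, forcing an incoming edge of colour $1$, an outgoing edge of colour $2$ (so by the single-outgoing-edge constraint no colour-$3$ non-loop edges can occur), and a self-loop of colour $3$. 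The prefactor supplies $\eta^{\sss (2)}_{c}$ at $c$, so $c$ must carry an incoming edge of colour $2$; for colours $1$ and $3$ the vertex $c$ may either carry a self-loop or sit on a monochromatic cycle, where the single-outgoing-edge constraint permits at most one of colours $1,3$ to appear as a cycle through $c$.

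Combining these local rules forces the non-loop edge structure to be exactly what is described in the definition of $\Gamma'(a,b,c)$: a self-avoiding walk $\gamma$ from $a$ to $c$ through $b$, coloured $1$ from $a$ to $b$ and $2$ from $b$ to $c$, together with vertex-disjoint monochromatic directed cycles satisfying conditions~(iii)--(v). The weight calculation then proceeds as in the proofs of Propositions~\ref{prop:Z-graph} and~\ref{prop:num}: self-loops at $x$ contribute $r_{x}$, edges $(x,y)$ contribute $\beta_{xy}$, and each cycle of colour $1$ or $2$ contributes a sign $-1$ (yielding the factor $(-1)^{F(G)}$ in $w_{0}(G)$); the Grassmann reordering along $\gamma$ produces no sign, by the same manipulation as in the last display of the proof of Proposition~\ref{prop:num}.

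The main subtlety is verifying that no cancellations occur between different ways of producing the same coloured graph. In the $S$ case this was the heart of the combinatorics: cancellations between $S_{1}$, $S_{4}$, and $S_{6}$ were essential, and their incomplete cancellation at the endpoints $a,c$ produced the error term $\Theta$. Here the simpler product structure of $e^{S'}$ supplied by Lemma~\ref{lem:expS'} means that each allowed coloured graph arises in exactly one way, so no cancellations are needed and no analogue of $\Theta$ appears; this is the structural reason why Theorem~\ref{thm:main-2} is an exact identity rather than an asymptotic one.
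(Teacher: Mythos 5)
Your proposal follows the paper's own argument essentially step for step: the same expansion of $e^{S'}$ via Lemma~\ref{lem:expS'}, the same local degree analysis at $a$, $b$, $c$, and the same weight computation. However, there is one step that your write-up asserts without justification --- and it is the same step the paper's proof glosses over --- namely the claim that when $c$ has colour-$1$ (or colour-$3$) in- and out-degree one, $c$ must ``sit on a monochromatic cycle''. The degree analysis only shows that the colour-$1$ edge set is a disjoint union of one directed path from $a$ to $b$ and some directed cycles; a priori $c$ can be an \emph{interior} vertex of that path rather than lying on a cycle, because $c$'s single outgoing-edge slot coming from the second product in~\eqref{eq:expS'} is left free by colour $2$ (which has out-degree zero at $c$). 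In such a configuration the concatenation of the colour-$1$ and colour-$2$ paths visits $c$ twice, so it is not a self-avoiding walk and the coloured graph is not of the form counted by $\Gamma'(a,b,c)$; moreover, since each coloured graph arises from exactly one choice of terms in the product expansion of $e^{S'}$, there is no cancellation available to remove it.

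Concretely, take $\cc G$ to be the triangle on $\{a,b,c\}$. Choosing the edge terms $\beta_{ac}v^{\sss (1)}_c u^{\sss (1)}_a$ from $a$'s factor, $\beta_{cb}v^{\sss (1)}_b u^{\sss (1)}_c$ from $c$'s factor, and $\beta_{bc}v^{\sss (2)}_c u^{\sss (2)}_b$ from $b$'s factor, together with self-loops at $(a,2)$, $(a,3)$, $(b,3)$ and $(c,3)$, multiplies against the prefactor $u^{\sss (2)}_c v^{\sss (2)}_b u^{\sss (1)}_b v^{\sss (1)}_a$ to give a top-degree monomial with coefficient $\pm\beta_{ac}\beta_{cb}\beta_{bc}\, r_a^2 r_b r_c \neq 0$, whose colour-$1$ edges form the path $a\to c\to b$ passing through $c$. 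This term contributes to the left-hand side of~\eqref{eq:num'} but not to $\Gamma'(a,b,c)$ as defined. You therefore need an additional argument that excludes, or cancels, configurations in which the colour-$1$ path from $a$ to $b$ passes through $c$; as written your proof (like the paper's) does not supply one, and without it the identity~\eqref{eq:num'} is not established. Note also that the relative size of such extra terms is $O(\beta/r_c)=O(m_c^{-2})$, so they behave like the error term $\Theta$ of the symmetric case rather than vanishing identically --- which undercuts your closing remark that ``no analogue of $\Theta$ appears'' for the chiral action.
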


\begin{proof}
  We will expand the product \eqref{eq:expS'}, and see which terms
  combine with the prefactor in \eqref{eq:num'} to give a top-degree
  term. As in the proof of Proposition~\ref{prop:Z'}, we will first
  characterize the subgraphs of $\mathcal G$ corresponding to these
  terms.

  Assume that $G$ is such a graph.  We first consider a vertex
  $x\notin \{a,b,c\}$. Then, just as in the proof of
  Proposition~\ref{prop:Z'}, either $x$ has in- and out-degree zero,
  or $x$ has colour in- and out-degree one for exactly one colour.

  Next we consider the vertices $\{a,b,c\}$, beginning with $a$. Due
  to the term $v^{\sss (1)}_a$ in \eqref{eq:num'}, there cannot be a
  self-loop of colour $1$ at $a$. Therefore, since $G$ corresponds to
  a top-degree term, $u^{\sss (1)}_a$ must appear in the expansion
  of the second term in \eqref{eq:expS'}. That is, $a$ has an
  outgoing edge of colour $1$, and no other outgoing edges. Moreover,
  the terms $u^{\sss (2)}_a$ and $u^{\sss (3)}_a$ must appear as
  self-loops. Thus $G$ contains a single edge adjacent to $a$, which
  is outgoing of colour $1$.

  For $b$, the term $v^{\sss (2)}_b$ implies $b$ has a single outgoing
  edge, of colour $1$. The term $u^{\sss (1)}_b$ implies $b$ has an
  incoming edge of colour $2$. Since $u^{\sss (3)}_b$ can only appear
  as a self loop, we conclude that $b$ has in- and out- degree zero of
  colour $3$.

  For $c$, the term $u^{\sss (2)}_c$ implies $c$ has in-degree one and
  out-degree zero for colour $2$. An argument as in the proof of
  Proposition~\ref{prop:Z'} shows $c$ can either have (i) no adjacent edges of
  colours $1$ and $3$, or (ii) in- and out-degree one exactly one of these
  colours. 

  These observations imply that the graph indeed has the structure
  given in the definition of $\Gamma'$. The weight can be calculated
  as in the proof of Proposition~\ref{prop:num}.
\end{proof}

\begin{prop}
\label{claim:num'}
For any three distinct vertices $a,b,c\in \cc V$,
$\Gamma'(a,b,c)=r_{b}^{-1} \Gamma(a,b,c)$, i.e.,
\begin{equation}
\Gamma'(a,b,c) = \frac{\prod_x r_x^3}{r_c r_b} \sum_{\gamma \in \saw(a,c)} \sum_{L\in\mathcal{L}_\gamma}(-1)^{|L|}w(\gamma)1_{b\in\gamma} \prod_{C\in L}w(C).
\end{equation}
\end{prop}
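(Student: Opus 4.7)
The plan is to follow the strategy used for Proposition~\ref{cor:num}: sum $w_0(G)$ over the colourings of cycle components after fixing the underlying self-avoiding walk, and then reorganize the $r_x$ factors. Starting from the definition $\Gamma'(a,b,c) = \sum_G w_0(G)$ with $G$ satisfying conditions (i)--(v), I will first fix a coloured self-avoiding walk $\gamma$ from $a$ through $b$ to $c$ and group together graphs that differ only in the colouring of their cycle components.

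The crucial observation is that, by (iii)--(v), any directed cycle meeting $V(\gamma)$ must do so at $c$, and only with colour $1$ or colour $3$ (colour $2$ being forbidden at every vertex of $\gamma$). Since a cycle of colour $1$ contributes sign $-1$ under $(-1)^{F(G)}$ and a cycle of colour $3$ contributes $+1$, these two contributions cancel, and only configurations in which every cycle is vertex-disjoint from $\gamma$ survive. The cycle sum then reduces to one over $L \in \cc L_\gamma$. For each such cycle all three colours $1,2,3$ are admissible and their signs $-1,-1,+1$ sum to $-1$, giving the factor $(-1)^{|L|}$ in the claimed formula.

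It then remains to distribute the $r_x$ factors. I will absorb one factor of $r_x$ per non-terminal vertex of $\gamma$ into $w(\gamma)$, one factor of $r_x$ per vertex of each cycle $C$ into $w(C)$, and combine what is left with the self-loop weights $r_x^{c(x)}$ coming from \eqref{eq:Z-weight}, where $c(x)$ counts the colours not present at $x$. A short case analysis ($x \notin G$; $x$ in a cycle disjoint from $\gamma$; $x \in \gamma \setminus \{a,b,c\}$; $x=a$; $x=b$; $x=c$) gives $r_x^3$ at every vertex except for $r_b^2$ at $b$ and $r_c^2$ at $c$, yielding the prefactor $\prod_x r_x^3 /(r_c r_b)$.

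The point that requires the most care is the vertex-weight bookkeeping at $b$ and $c$: the vertex $b$ has a free self-loop only in colour $3$, since colours $1$ and $2$ are consumed by the incoming and outgoing edges of $\gamma$, while at $c$ one must combine the subcase with two free self-loops and the subcase with a cycle through $c$, the latter being killed by the colour cancellation above. No conceptual obstacle is anticipated, since the argument runs parallel to that of Proposition~\ref{cor:num}; only the slightly different set of admissible colourings imposed by (i)--(v) needs to be tracked.
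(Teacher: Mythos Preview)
Your proposal is correct and follows essentially the same approach as the paper: sum over colourings, observe that a cycle through $c$ can only carry colour $1$ or $3$ and that these contributions cancel, then account for the $r_x$ factors as in Proposition~\ref{cor:num}. The paper's own proof is a terse pointer back to Proposition~\ref{cor:num} together with the colour-$1$/colour-$3$ cancellation at $c$; your write-up simply makes the vertex-by-vertex bookkeeping explicit.
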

\begin{proof}
  As is the proof of Proposition~\ref{cor:num}, we sum the possible
  colourings in~\eqref{eq:gamma'}.  Note that graphs that intersect
  $\gamma$ at the vertex $c$ appear in~\eqref{eq:gamma'} have total
  weight $0$, since each such graph appears once when the cycle
  intersecting $c$ is of colour $1$ and once when it has colour $3$,
  and these two coloured graph have opposite weights.

  The vertex factor is calculated in the same manner as in
  Proposition~\ref{cor:num}.
\end{proof}

\begin{proof}[Proof of Theorem~\ref{thm:main-2}]
  By the definition~\eqref{eq:U-2} of $U'(a,b,c)$ and
  Propositions~\ref{prop:Z'}--\ref{claim:num'},  
  \begin{equation*}
    U'(a,b,c) = \frac{C'_{bc} \Gamma'(a,b,c)}{Z'} = \frac{C_{bc}\Gamma(a,b,c)}{Z},
  \end{equation*}
  and hence the Theorem follows by applying Proposition~\ref{cor:gamma}.
\end{proof}

\bibliographystyle{abbrv}
\bibliography{refs}

\end{document}